\documentclass[12pt]{amsart}
\usepackage{amssymb, graphicx}
\usepackage[mathscr]{eucal}
\usepackage{amsmath,amscd}
\usepackage{rotating}
\usepackage{epstopdf}
\usepackage{diagbox}
\usepackage{multirow}
\usepackage{amssymb}
\usepackage{adjustbox}
\usepackage{slashbox}
\usepackage{amsfonts}
\usepackage[final]{pdfpages}
\usepackage[dvipsnames]{xcolor}
\usepackage{ulem}
\usepackage{bbm}
\usepackage{float}

\newcommand{\Z}{\mathbbm{Z}}

\theoremstyle{plain}
\newtheorem{prop}{Proposition}[section]
\newtheorem{coro}[prop]{Corollary}

\newtheorem{lemm}[prop]{Lemma}
\newtheorem{ques}[prop]{Question}
\newtheorem{thm}[prop]{Theorem}

\theoremstyle{definition}
\newtheorem{defn}[prop]{Definition}

\newtheorem{rem}[prop]{Remark}

\DeclareMathOperator{\mdet}{mdet}

\def\mcg#1;#2{\Gamma_{#1,#2}}
\def\fg#1;#2{\Pi_{#1,#2}}
\def\tb#1;#2{\mathscr{K}_{\frac{#1}{#2}}}

\begin{document}

\title[On Alternateness of the Jones-Kauffman Polynomial of Virtual Links]
{On Alternateness of the Jones-Kauffman Polynomial of Virtual Links}

\keywords{alternating links, virtual links, Jones-Kauffman polynomial}

\author{Hamid Abchir}
\address{Fundamental and Applied Mathematics Laboratory, Hassan II University EST, Casablanca, Morocco}
\email{hamid.abchir@univh2c.ma}

\author{Khaled Qazaqzeh}
\address{Department of Mathematics, Faculty of Science, Yarmouk University, Irbed, Jordan, 21163}
\email{qazaqzeh@yu.edu.jo}

\author{Mohammed Sabak}
\address{Department of Mathematics Ain Chock Faculty of Sciences Hassan II University, Km 8 Route d’El Jadida, B.P. 5366 Maarif,
20100 Casablanca, Morocco}
\email{mohammed.sabak-etu@etu.univh2c.ma}

\date{02/10/2026}

\subjclass{57K10, 57K14, 57K18}

\begin{abstract}
  We study alternateness of the Jones-Kauffman polynomial of virtual links. In particular, we identify a class of virtual links with alternating Jones-Kauffman polynomial that will be called the class of mock-alternating virtual links. Then, we explain how every mock-alternating virtual link can give rise to an infinite family of mock-alternating virtual links using the twisting technique introduced in \cite{CK} and later generalized in \cite{CKa,QCQ}. Finally, we give a connected state expansion of the mock-determinant for checkerboard-colorable links.
\end{abstract}

\maketitle
\section{introduction}
The independent work of Thistlethwaite in \cite{Th} and Murasugi in \cite{M} demonstrated that the Jones polynomial of a non-split classical alternating link is indeed alternating. This finding was later extended to the class of quasi-alternating links by Ozsváth and Szabó in \cite{OS}. While the concept of alternating links was readily adapted to virtual links, it is unfortunate that the Jones-Kauffman polynomial, which serves as the natural extension of the Jones polynomial to virtual links, does not necessarily retain the alternating property for alternating virtual links. This raises a pertinent question: Is it possible to identify or characterize a class of virtual links with alternating Jones-Kauffman polynomial? 

In this paper, we define and construct a class of virtual links that is called the class of mock-alternating links. We show that each link in this class has  alternating Jones-Kauffman polynomial and this provides an easy obstruction for a link to be mock-alternating.  To achieve this, we initially define the mock-determinant as a virtual link invariant being the fundamental tool in establishing the class of mock-alternating virtual links. The mock-determinant is defined as the $L_{1}$-norm of the specialization of the Jones-Kauffman polynomial at eighth root of unity. This formulation can be viewed as an extension of the determinant from classical links to virtual links. Additionally, another extension of the determinant is discussed in \cite{BCK, BK} in the case of checkerboard colorable virtual links. As noted in \cite[Section\,1.5]{GPV}, there may exist multiple invariants that extend the determinant as a numerical link invariant for classical links. Indeed, one can extend the determinant using the $L_{2}$-norm instead of the $L_{1}$-norm. Consequently, the findings presented in this paper appear to be upon the specific definition chosen for the extension of the determinant. However, we note that for specific virtual links, the two extensions don't match. Finally, we give some properties and derive a formula for the mock-determinant in terms of connected states for checkerboard colorable links using the connected state expansion of the Jones--Kauffman polynomial from \cite{abchir}.

\section{Jones-Kauffman polynomial and the mock-determinant of virtual links}

The Jones-Kauffman polynomial $f_L(t)$ is an invariant of oriented virtual links that was first introduced in \cite{Ka} and motivated by the work in \cite{J,Ka2}. It is a Laurent polynomial with integral coefficients that can be defined in terms of the Kauffman bracket. We briefly recall the definition of this polynomial as follows:

\begin{defn}
The Kauffman bracket polynomial is a function from the set of unoriented virtual link diagrams in the oriented plane to the ring of Laurent polynomials with integer coefficients in an indeterminate $A$. It maps a virtual link diagram $D$ to $\left\langle D\right\rangle\in \mathbb Z[A^{-1},A]$ and it is defined by the following relations:
\begin{enumerate}
\item $\left\langle \bigcirc \right\rangle=1$,
\item $\left\langle \bigcirc \cup D\right\rangle=(-A^{-2}-A^2)\left\langle D
\right\rangle$,
\item $\left\langle D\right\rangle=A\left\langle D_0\right\rangle+A^{-1}\left\langle
D_1\right\rangle$,
\end{enumerate}
where $\bigcirc$, in relation 2,  denotes a trivial circle  disjoint from the rest of the virtual link diagram, and  $D,D_0, \text{and } D_1$  represent three unoriented virtual link diagrams which are identical everywhere  except in a small region where they are as indicated in  Figure \ref{figure}.
\end{defn}
For an oriented virtual link diagram $D$, we let $x(D)$ denotes the number of negative crossings and  $y(D)$ denotes the number of positive crossings in $D$ according to the scheme in Figure \ref{Diagram1}. The writhe of the virtual link diagram $D$ is defined to be the integer $w(D) = y(D) - x(D)$.
\begin{defn}
The Jones-Kauffman polynomial $f_{L}(A)$ of the oriented virtual link $L$ is the Laurent polynomial in $A$ with integer coefficients defined by
\begin{equation*}
f_{L}(A)=(-A)^{-3w(D)}\left\langle D \right\rangle \in \mathbb Z[A,A^{-1}],
\end{equation*}
where $D$ is an oriented diagram for $L$, and $\left\langle D \right\rangle $ is the Kauffman bracket of the unoriented virtual link obtained from $D$ by ignoring  the orientation.
\end{defn}
\vspace{-0.2cm}
\begin{figure}[h]
	\centering
		\reflectbox{\includegraphics[scale=0.11]{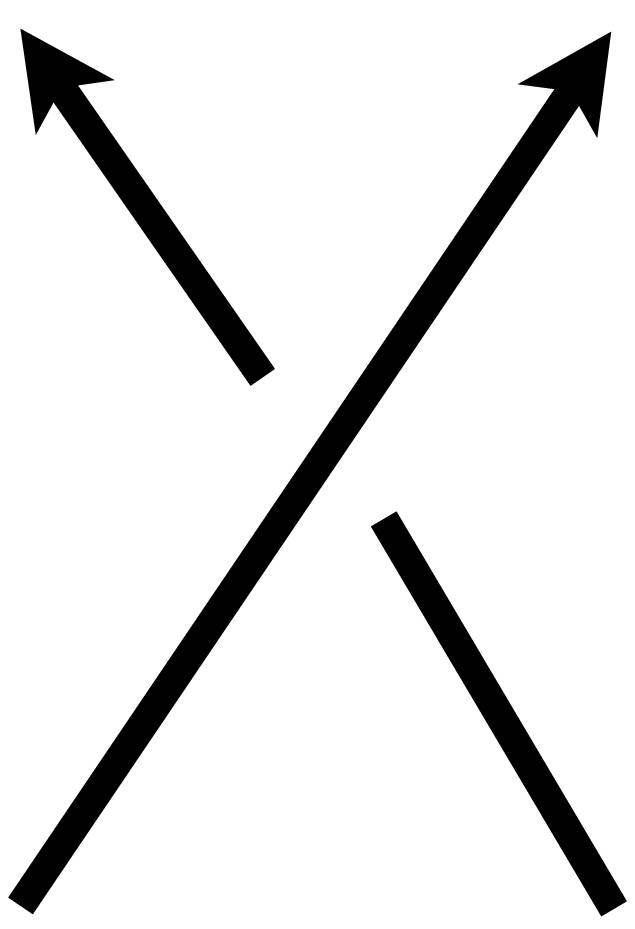}}\hspace{1.1cm}
		\includegraphics[scale=0.11]{Diagram1}
		\vspace{-0.2cm}
	\caption{Negative and positive crossings respectively.}
	\label{Diagram1}
\end{figure}

\begin{rem}
In this paper, we restrict ourselves to the case where the crossing $c$ in the link diagram of $L$ is as illustrated in Figure \ref{figure} and in this case such a crossing is said to be of type I. Similar results can be obtained for the other type of crossing by rotating the crossing by $\dfrac{\pi}{2}$.
\end{rem}

From now on, we let $c$ be a crossing of the oriented virtual link diagram $D$ that represents the oriented virtual link $L$. Also, we let $D_{0}$ and $D_{1}$ be the link diagrams obtained by smoothing the diagram $D$ at the crossing $c$ with corresponding virtual links $L_{0}$ and $L_{1}$, respectively.  We point out that the orientation on $D$ induces an orientation for one of the two diagrams $D_{0}$ or $D_{1}$ and the second one can be equipped with any orientation. Therefore, we can always assume that $D_{0}$ and $D_{1}$ are oriented diagrams for the oriented virtual links $L_{0}$ and $L_{1}$ if necessary. 
\begin{rem}\label{simple2} 
\begin{enumerate}
\item If the crossing is positive of type I, then we have $x(D_{0}) = x(D), y(D_{0}) = y(D) - 1, x(D_{1}) = x(D) + e$ and
$y(D_{1}) = y(D)-e-1$. Therefore, $w(D_{0}) =w(D)-1$ and $w(D_{1}) = w(D) -2e-1$,
where $e$ denotes the difference between the number of negative crossings in
$D_{1}$ and the number of negative crossings in $D$.
\item If the crossing is negative of type I, then we have $x(D_{1}) = x(D)-1, y(D_{1}) = y(D), x(D_{0}) = x(D) + e-1$ and
$y(D_{0}) = y(D)-e$. Therefore, $w(D_{1}) =w(D)+1$ and $w(D_{0}) = w(D) - 2e+1$,
where $e$ denotes the difference between the number of negative crossings in
$D_{0}$ and the number of negative crossings in $D$.
\end{enumerate}
\end{rem}

\begin{lemm}\label{jonespolynomial}
The Jones-Kauffman polynomial of the link $L$ with a fixed crossing $c$ satisfies one of the following skein relations:
\begin{enumerate}
\item If $c$ is a positive crossing of type I, then
$ f_{L}(A) = -A^{-2}f_{L_{0}}(A)-A^{-6e-4}f_{L_{1}}(A)$.
\item If $c$ is a negative crossing and of type I, then
$ f_{L}(A) = -A^{-6e+4}f_{L_{0}}(A)-A^{2}f_{L_{1}}(A)$,
\end{enumerate}
where $e$ is defined in Remark \ref{simple2}.
\end{lemm}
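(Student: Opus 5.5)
The plan is a direct computation: start from the definition $f_L(A)=(-A)^{-3w(D)}\langle D\rangle$, expand $\langle D\rangle$ at the crossing $c$ with relation (3) of the Kauffman bracket, and then rewrite each bracket $\langle D_i\rangle$ as $(-A)^{3w(D_i)}f_{L_i}(A)$. The writhes $w(D_i)$ are supplied by Remark \ref{simple2}. Since $f_{L_i}$ is an invariant of the oriented link $L_i$, the orientation fixed on $D_0$ and $D_1$ in the discussion before Remark \ref{simple2} makes each $f_{L_i}(A)$ well defined. The only inputs are therefore relation (3) and the two writhe formulas.

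For case (1), $c$ is positive of type I, and Remark \ref{simple2}(1) gives $w(D_0)=w(D)-1$ and $w(D_1)=w(D)-2e-1$. Then
\begin{equation*}
f_L(A)=(-A)^{-3w(D)}\bigl(A(-A)^{3w(D_0)}f_{L_0}(A)+A^{-1}(-A)^{3w(D_1)}f_{L_1}(A)\bigr),
\end{equation*}
so the prefactors collapse to
\begin{equation*}
A(-A)^{-3}=-A^{-2}, \qquad A^{-1}(-A)^{-6e-3}=-A^{-6e-4}.
\end{equation*}
The second equality uses that $6e+3$ is odd, so $(-1)^{-6e-3}=-1$. This is exactly the claimed relation.

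For case (2), $c$ is negative of type I, and Remark \ref{simple2}(2) gives $w(D_1)=w(D)+1$ and $w(D_0)=w(D)-2e+1$. The same computation produces the prefactors
\begin{equation*}
A(-A)^{-6e+3}=-A^{-6e+4}, \qquad A^{-1}(-A)^{3}=-A^{2},
\end{equation*}
which again matches the statement.

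The main obstacle is bookkeeping rather than any conceptual step. We must check that the roles of $D_0$ and $D_1$ in relation (3) match the labelling of Figure \ref{figure} for a type I crossing, and that the sign parity $(-1)^{\text{odd}}=-1$ is applied correctly in both cases. Once Remark \ref{simple2} is taken as given, both are routine. If a sign disagrees, the first thing to re-check is which smoothing is the $A$-smoothing at a type I crossing.
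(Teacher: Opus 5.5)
Your proposal is correct: substituting $\langle D_i\rangle=(-A)^{3w(D_i)}f_{L_i}(A)$ into relation (3), using the writhe shifts from Remark \ref{simple2}, gives exactly the stated prefactors, and the minus signs come from the odd exponents $-3$, $-6e-3$, $-6e+3$, $3$. The paper states this lemma without a written proof, and your derivation is the routine computation it implicitly relies on.
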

We recall \cite[Theorem\, 3]{KM} that gives a splitting of the Jones-Kauffman polynomial. This splitting is crucial in defining the mock-determinant of any virtual link that will be used in defining the class of mock-alternating virtual links. 
\begin{thm}\label{splitting}\cite[Theorem\, 3]{KM}
Let $L$ be a $\mu$-component virtual link. Then the Jones-Kauffman polynomial $f_{L}(A)$ can be written as $f_{L}(A) = \Phi_{L}(A) + \Psi_{L}(A)$, where $\Phi_{L}(A) \in \mathbb{Z}[A^{4}, A^{-4}]\cdot A^{2(\mu-1)}$ and $\Psi_{L}(A) \in \mathbb{Z}[A^{4}, A^{-4}]\cdot A^{2\mu}$. 
\end{thm}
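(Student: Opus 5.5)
We prove the splitting statement of Theorem \ref{splitting} from the state sum, by tracking the exponents of $A$ modulo $4$. The idea is to attach to each state a parity, namely whether its number of loops has the same parity as $\mu$ or the opposite parity. We then show that states of each parity contribute monomials lying in exactly one of the two cosets $A^{2(\mu-1)}\mathbb{Z}[A^{4},A^{-4}]$ and $A^{2\mu}\mathbb{Z}[A^{4},A^{-4}]$. Taking $\Phi_L$ and $\Psi_L$ to be the two partial sums then gives the decomposition.

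Fix an oriented diagram $D$ of $L$ with $n$ classical crossings. Expanding by relations (1)--(3) gives the state sum
\begin{equation*}
\langle D\rangle=\sum_{S} A^{a(S)-b(S)}(-A^{2}-A^{-2})^{|S|-1}.
\end{equation*}
Here $a(S)+b(S)=n$ and $|S|$ is the number of loops of the state $S$. Every monomial of $(-A^{2}-A^{-2})^{|S|-1}$ has exponent $\equiv 2(|S|-1) \pmod 4$. The exponent $a-b=n-2b$ is congruent to $n$ mod $2$. Hence each monomial of the term of $S$ has exponent
\begin{equation*}
a(S)-b(S)+2|S|-2 \pmod 4 .
\end{equation*}
The normalization $(-A)^{-3w(D)}$ contributes the exponent $-3w\equiv w \pmod 4$ up to sign. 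So it suffices to understand
\begin{equation*}
E(S)=w(D)+a(S)-b(S)+2|S|-2 \pmod 4 ,
\end{equation*}
and to show that $E(S)$ is always even and takes only the values $2(\mu-1)$ and $2\mu$ mod $4$.

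For evenness we use $w\equiv n \pmod 2$ and $a-b\equiv n \pmod 2$, so $E(S)$ is even. For the finer statement we compare with a reference state. Let $S_0$ be the oriented (Seifert) state, which smooths every crossing along the orientation. At a positive crossing of type I this is the $A$-smoothing and at a negative one the $A^{-1}$-smoothing, as Remark \ref{simple2} shows. Hence $a(S_0)-b(S_0)=y-x=w$, and
\begin{equation*}
E(S_0)\equiv 2w+2|S_0|-2 \pmod 4 .
\end{equation*}
Changing the smoothing at one crossing changes $a-b$ by $\pm 2$, and changes $|S|$ by $+1$, $-1$, or $0$. The change is $0$ exactly when the move is a single-cycle (twisted) smoothing, which is possible for virtual diagrams. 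Thus each move changes $E$ by $0$ or $2$ mod $4$, so $E(S)$ always lies in $E(S_0)+2\mathbb{Z}$. Since $E(S)$ is even, it can only take the two residues $E(S_0)$ and $E(S_0)+2$ mod $4$. This is exactly the dichotomy we want, once the two residues are identified as $2(\mu-1)$ and $2\mu$.

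The main obstacle is pinning down $E(S_0)$. We need $2w+2|S_0|-2\equiv 2(\mu-1)$ or $2\mu \pmod 4$, which is equivalent to $w+|S_0|\equiv \mu$ or $\mu+1 \pmod 2$. Since any even residue is one of these two, the statement is automatic once $E(S_0)$ is even. Then $\{2(\mu-1),2\mu\}$ exhausts the even residues mod $4$, and the theorem follows. We set $\Phi_L$ to be the sum of the monomials with exponent $\equiv 2(\mu-1)$ and $\Psi_L$ the rest. The $\mu$-dependence is only a labeling convention, so that $\Phi_L$ recovers the classical part; the real content is that all exponents are even. The one check needing care is that every exponent is even, i.e.\ $w+n$ even, which holds because each crossing contributes $\pm1$ to both.
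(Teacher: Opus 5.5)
Your proof is correct. It necessarily differs from the paper, which gives no argument and simply imports the statement from \cite[Theorem\, 3]{KM}.

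You rightly see that, as formulated here, the theorem says nothing beyond evenness. The residues $2(\mu-1)$ and $2\mu$ exhaust the even residues mod $4$, so the statement is equivalent to $f_L(A)\in\mathbb{Z}[A^{2},A^{-2}]$. Then $\Phi_L,\Psi_L$ are forced to be the two coset projections of $f_L$, so no invariance issue arises, and your count $E(S)\equiv w+a(S)-b(S)\equiv 2n\equiv 0 \pmod 2$ settles it. The paper tacitly concedes this when Remark \ref{simple} relabels the two parts by the $\mu$-independent cosets $\mathbb{Z}[A^{4},A^{-4}]$ and $\mathbb{Z}[A^{4},A^{-4}]\cdot A^{2}$.

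What the citation supplies beyond your argument is the meaning of the $\mu$-labelling. The paper does use this after Definition \ref{determinant}, where for classical links one part vanishes. On that point your opening description is inaccurate. Since $w+n=2y(D)$, you have $E(S)=2\bigl(y(D)-b(S)+|S|-1\bigr)$, so the coset of a state is governed by the parity of $|S|-b(S)$, not of $|S|$. A merge or split changes both and leaves the coset unchanged.

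Your Seifert-state detour is exactly where the labelling gets its content. Smoothing the crossings of $D$ one at a time along the orientation always merges two curves or splits one, never a single-cycle move. Hence $|S_0|\equiv n+\mu \pmod 2$, and so $E(S_0)\equiv 2(\mu-1) \pmod 4$, not merely ``some even residue''. Combined with your one-switch analysis, the $A^{2\mu}$-part is fed only by states reached from $S_0$ through an odd number of single-cycle switches. That is why it vanishes for classical diagrams. None of this is needed for the statement as written, but it is what makes $\Phi_L$ the ``classical part'' you allude to.
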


\begin{rem}\label{simple}
To simplify the arguments and to avoid any confusion, we write always $f_{L}(A) = \Phi_{L}(A) + \Psi_{L}(A)$ with $ \Phi_{L}(A) \in \mathbb{Z}[A^{4}, A^{-4}]$ and $\Psi_{L}(A) \in \mathbb{Z}[A^{4}, A^{-4}]\cdot A^{2}$ for the link $L$.
\end{rem}

Now we define the mock-determinant of any virtual link as follows:
\begin{defn}\label{determinant}
The mock-determinant of the virtual link $L$ denoted by $\mdet(L)$ is defined to be the $L_{1}$-norm of the complex number $f_{L}(\xi)$ that is $ \mdet(L) = ||f_{L}(\xi)||_{1} = \left|\Re\left(f_{L}(\xi)\right)\right| + \left|\Im(f_{L}(\xi))\right| = |\Phi_{L}(\xi)| + |\Psi_{L}(\xi)|$, where $\xi = \frac{1+i}{\sqrt{2}}$ and $\Re(x),$ and $\Im(x)$ are the real and the imaginary parts of the complex number $x$, respectively.
\end{defn}

\begin{rem}
According to Theorem \ref{splitting} and Remark \ref{simple}, we note that $\Phi_{L}\left(\xi\right)$ and $\Psi_{L}\left(\xi\right)$ are pure real and pure imaginary, respectively. This definition extends the definition of the determinant for classical links as a consequence of the facts that $\Phi_{L}(A) =0 $ or $\Psi_{L}(A) =0 $ in Theorem \ref{splitting} and  $\det(L) = |f_{L}(\xi)|=||f_{L}(\xi)||_{1}=\mdet(L)$ for any classical link $L$. 
\end{rem}

\section{Mock-alternating Virtual Links}

In this section,  we define the class of mock-alternating virtual links recursively motivated by the definition of the class of quasi-alternating classical links in \cite{OS}.
\begin{defn}\label{def}
The set $\mathcal{M}$ of mock-alternating links is the smallest set of virtual links satisfying the following properties:
\begin{itemize}
  \item The unknot belongs to $\mathcal{M}$.
  \item If $L$ is a virtual link with a diagram $D$ containing a crossing $c$ such that
\begin{enumerate}
\item The two links $L_0$ and $L_1$ represented by the smoothings $D_0$ and $D_1$ of the diagram $D$ at the crossing $c$ respectively belong to $\mathcal{M}$ and satisfy,
\item $\mdet(L_{0}), \mdet(L_{1}) \geq 1$,
\item $\mdet(L) = \mdet(L_{0}) + \mdet(L_{1})$;\\
  then $L$ is in $\mathcal{M}$ and in this case we say that $c$ is a mock-alternating crossing of $D$ and that $D$ is mock-alternating at $c$.
\end{enumerate}
\end{itemize}
\end{defn}

\begin{figure} [h]
\begin{center}
\includegraphics[scale=0.3]{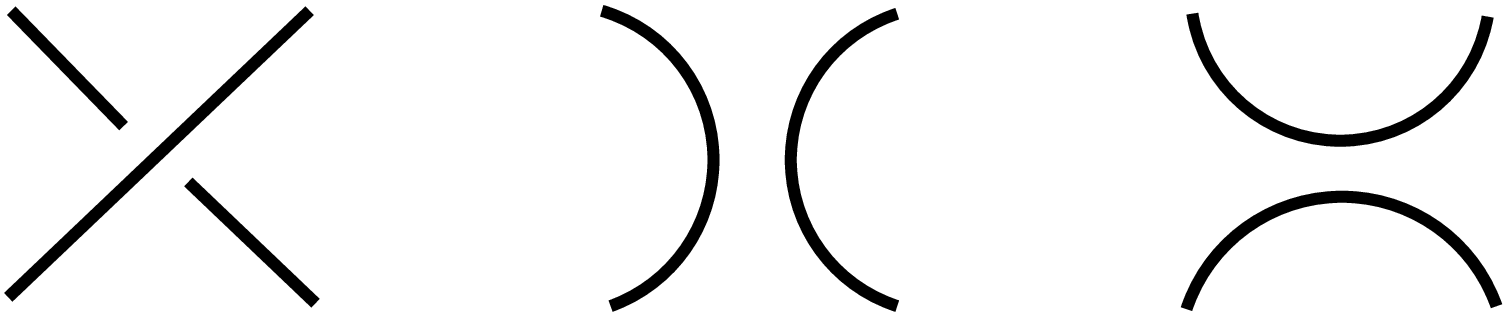} \\
\end{center}
\caption{From the left, a crossing $c$ of a link diagram $D$ and its smoothings $D_0$ and $D_1$ at $c$.}\label{figure}
\end{figure}
According to the recursive definition, it is almost impossible to show that a given link is not mock-alternating. An alternative technique to address such a problem is to introduce an obstruction criterion for a virtual link to be mock-alternating. Then we show that such a link does not pass such an obstruction. To introduce the first obstruction for a virtual link to be mock-alternating, we modify and extend first the notation for a polynomial to be alternating.
\begin{defn}
The polynomial $g(A) = g_{1}(A) + g_{2}(A)$ with $g_{1}(A) \in \mathbb{Z}[A^{4}, A^{-4}]$ and $g_{2}(A) \in \mathbb{Z}[A^{4}, A^{-4}]\cdot A^{2}$ is said to be alternating if both $g_{1}(A)$ and $g_{2}(A)$ are alternating. In particular,  $g_{i}(A)$ is alternating if $ (-1)^{\frac{j-k}{4}}a_{j}a_{k}\geq  0$ whenever $a_{j}$ and $a_{k}$ are coefficients of the two monomials $A^{j}$ and $A^{k}$ in the polynomial $g_{i}(A)$ for $i =1$ or $i=2$.
\end{defn}
\begin{thm}\label{alternating}
Let $L$ be a mock-alternating virtual link, then the Jones-Kauffman polynomial $f_{L}(A)$ is alternating. In particular, the polynomials $\Phi_{L}(A)$ and $\Psi_{L}(A)$ are alternating.
\end{thm}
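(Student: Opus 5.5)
Throughout, I adopt the convention of Remark \ref{simple}: $\Phi_L\in\mathbb{Z}[A^{4},A^{-4}]$ and $\Psi_L\in\mathbb{Z}[A^{4},A^{-4}]\cdot A^{2}$. I would prove the following stronger statement by induction on the recursive construction of $\mathcal{M}$ in Definition \ref{def}:
\begin{quote}
For $L\in\mathcal{M}$, the polynomials $\Phi_L$ and $\Psi_L$ are alternating, and for each of them there is a sign $\epsilon\in\{\pm1\}$ with $\epsilon\, a_j(-1)^{j/4}\ge 0$ for every coefficient $a_j$ of $A^j$.
\end{quote}
The second clause is just a normalization of alternateness. The key observation is that at $\xi=e^{i\pi/4}$ we have $\xi^{j}=(-1)^{j/4}$ for $j\equiv 0 \pmod 4$, and $\xi^{j}=i\,(-1)^{(j-2)/4}$ for $j\equiv 2\pmod 4$. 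Consequently a polynomial is alternating exactly when
\[
|\Phi_L(\xi)|=\sum_j |a_j| \quad\text{and}\quad |\Psi_L(\xi)|=\sum_j |b_j|,
\]
where $a_j$ and $b_j$ denote the coefficients of $\Phi_L$ and $\Psi_L$. In all cases the triangle inequality gives $|\Phi_L(\xi)|\le \|\Phi_L\|_{\ell^1}$, where $\|\cdot\|_{\ell^1}$ is the sum of absolute values of coefficients, and similarly for $\Psi_L$. The base case, the unknot, has $f=1$, so $\Phi=1$ and $\Psi=0$, which is alternating.

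For the inductive step, take a mock-alternating crossing $c$ with $L_0,L_1\in\mathcal{M}$, and suppose first that $c$ is positive. Lemma \ref{jonespolynomial} gives
\[
f_L=-A^{-2}f_{L_0}-A^{-6e-4}f_{L_1}.
\]
Multiplying by $A^{-2}$ swaps the $\mathbb{Z}[A^{\pm4}]$ part and the $A^{2}\mathbb{Z}[A^{\pm4}]$ part, and multiplying by $A^{-6e-4}$ swaps them or not according to the parity of $e$. So each of $\Phi_L,\Psi_L$ is a sum of one piece coming from $L_0$ and one piece coming from $L_1$, each piece multiplied by a monomial. Multiplying by a monomial preserves alternateness and preserves $\ell^1$ norms. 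Hence
\[
\|\Phi_L\|_{\ell^1}+\|\Psi_L\|_{\ell^1}\le \|\Phi_{L_0}\|_{\ell^1}+\|\Psi_{L_0}\|_{\ell^1}+\|\Phi_{L_1}\|_{\ell^1}+\|\Psi_{L_1}\|_{\ell^1}.
\]
By the induction hypothesis the right-hand side equals $\mdet(L_0)+\mdet(L_1)$, which equals $\mdet(L)$ by condition (3). This in turn equals $|\Phi_L(\xi)|+|\Psi_L(\xi)|\le \|\Phi_L\|_{\ell^1}+\|\Psi_L\|_{\ell^1}$. Every inequality in this chain is therefore an equality. In particular $|\Phi_L(\xi)|=\|\Phi_L\|_{\ell^1}$ and $|\Psi_L(\xi)|=\|\Psi_L\|_{\ell^1}$, which is alternateness of $\Phi_L$ and $\Psi_L$ by the observation above. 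The negative crossing case is identical, using the second relation of Lemma \ref{jonespolynomial}.

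The step I expect to need the most care is the bookkeeping of which part of $f_{L_i}$ lands in which part of $f_L$. The exponents $-2$ and $-6e-4$ are both even, so the shifted pieces stay within the two residue classes $0$ and $2 \pmod 4$, and the decomposition respects the splitting of Theorem \ref{splitting}. I must also make sure that the first inequality really compares $\ell^1$ norms, not values at $\xi$. There may be cancellation between the $L_0$-piece and the $L_1$-piece, but that only lowers $\|\cdot\|_{\ell^1}$, so the chain of inequalities still holds. No sign analysis of $\Phi_{L_0}(\xi)$ versus $\Phi_{L_1}(\xi)$ is then required. Condition (2), $\mdet\ge1$, is not needed for this argument; it only ensures nondegeneracy.
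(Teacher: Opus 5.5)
Your proof is correct, and it takes a cleaner route than the paper's.

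The paper inducts on $\mdet(L)$. That is where condition (2) and the identification of the $\mdet=1$ case with the unknot enter. It uses the skein relation to write $\Phi_L$ and $\Psi_L$ as sums of two shifted pieces, and deduces from $\mdet(L)=\mdet(L_0)+\mdet(L_1)$ and the triangle inequality at $\xi$ that the values of these pieces add without cancellation. It then argues at the coefficient level that the two alternating pieces have compatible sign patterns. This needs a case analysis on the sign of $c$, the parity of $e$, and $e \bmod 8$, after which the sum is again alternating.

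You instead encode alternateness as the equality case $|P(\xi)|=\|P\|_{\ell^1}$ of the triangle inequality, and squeeze $\mdet(L)$ between $\ell^1$-norms. This handles all four cases uniformly and needs no sign bookkeeping. Because it is a structural induction over the minimal set $\mathcal{M}$, it never uses condition (2) or the base-case identification. It also gives, for free, that $\mdet(L)=\|\Phi_L\|_{\ell^1}+\|\Psi_L\|_{\ell^1}$ for every mock-alternating $L$, and that the $L_0$- and $L_1$-contributions never cancel coefficientwise.

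What the paper's route makes explicit is the sign relation between the two pieces, which it reuses in the proof of Theorem~\ref{new2}. In your framework that relation is recovered from the equality $|P(\xi)+Q(\xi)|=|P(\xi)|+|Q(\xi)|$ for the two shifted pieces $P,Q$, which your chain of equalities also provides. Coefficientwise $\ell^1$-additivity alone would not suffice, since $P$ and $Q$ may have disjoint supports.
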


\begin{proof}
We use induction on the mock-determinant of the link $L$. According to the definition and to the result of Proposition (\ref{split}), $L$ is the unknot whenever $\mdet(L)= 1$ and thus the result follows directly since $f_{A}(L) =1$ in this case. Now we assume that the result holds for any mock-alternating virtual link of mock-determinant less than $\mdet(L)$. In particular, we can deduce that the Jones-Kauffman polynomials for the links $L_{0}$ and $L_{1}$  are alternating. 

To prove the result for the link $L$, we have four cases to consider based on whether the crossing $c$ is positive or negative and $e$ is even or odd. We consider the case when $c$ is positive and $e$ is odd and similar argument can be applied to the other cases. 
In this case using the notion in Remark \ref{simple} and according to Lemma \ref{jonespolynomial}, we have 
\begin{align*}
 f_{L}(A) &   = \Phi_{L}(A) + \Psi_{L}(A) \\ & = -A^{-2}f_{L_{0}}(A)-A^{-6e-4}f_{L_{1}}(A) \\ & = -A^{-2}\left( \Phi_{L_{0}}(A) + \Psi_{L_{0}}(A)\right)-A^{-6e-4}\left( \Phi_{L_{1}}(A) + \Psi_{L_{1}}(A)\right), 
\end{align*}
with 
\begin{align*}
\Phi_{L}(A) =
  \begin{cases}
  -A^{-2}\Psi_{L_{0}}(A)-A^{-6e-4}\Psi_{L_{1}}(A),
     & \text{if $e $ is odd},
     \\
    -A^{-2}\Psi_{L_{0}}(A)-A^{-6e-4}\Phi_{L_{1}}(A), & \text{if $e$ is even},
 \end{cases}
  \end{align*}  
  and   
  \begin{align*}
\Psi_{L}(A) =
  \begin{cases}
  -A^{-2}\Phi_{L_{0}}(A)-A^{-6e-4}\Phi_{L_{1}}(A),
     & \text{if $e $ is odd},
     \\
    -A^{-2}\Phi_{L_{0}}(A)-A^{-6e-4}\Psi_{L_{1}}(A), & \text{if $e$ is even}.
 \end{cases}
  \end{align*}                                                 
In the case when $e$ is odd, we have 
\begin{align*}
 |\Phi_{L}(\xi)| +   |\Psi_{L}(\xi)| & = \left|-\xi^{-2}\Psi_{L_{0}}\left(\xi\right) -\xi^{-6e-4}\Psi_{L_{1}}\left(\xi\right)\right| +  \left|-\xi^{-2}\Phi_{L_{0}}\left(\xi\right) -\xi^{-6e-4}\Phi_{L_{1}}\left(\xi\right)\right| \\ & = \mdet(L) \\ & =  \mdet(L_{0}) +  \mdet(L_{1}) \\ & =   \left|\Phi_{L_{0}}\left(\xi\right)\right|  + \left|\Psi_{L_{0}}\left(\xi\right)\right|  +  \left|\Phi_{L_{1}}\left(\xi\right)\right| + \left|\Psi_{L_{1}}\left(\xi\right)\right|.
\end{align*}

Thus we can conclude that \[ \left|-\xi^{-2}\Psi_{L_{0}}\left(\xi\right) -\xi^{-6e-4}\Psi_{L_{1}}\left(\xi\right)\right| =  \left|\Psi_{L_{0}}\left(\xi\right)\right| + \left|\Psi_{L_{1}}\left(\xi\right)\right|,\] and 
 \[  \left|-\xi^{-2}\Phi_{L_{0}}\left(\xi\right) -\xi^{-6e-4}\Phi_{L_{1}}\left(\xi\right)\right|  = \left|\Phi_{L_{0}}\left(\xi\right)\right|   + \left|\Phi_{L_{1}}\left(\xi\right)\right|,\] since the polynomials $\Psi_{L_{0}}(A), \Psi_{L_{1}}(A), \Phi_{L_{0}}(A),$ and $\Phi_{L_{1}}(A)$ are all alternating.

The polynomials $ \Psi_{L_{0}}^{'}(A) = -A^{-2}\Psi_{L_{0}}(A)$ and $\Psi_{L_{1}}^{'}(A) = -A^{-6e-4}\Psi_{L_{1}}(A)$ are alternating as a result of the fact that  $\Psi_{L_{0}}(A)$ and $\Psi_{L_{1}}(A)$ are alternating. We claim that $a_{i}b_{i} \geq 0$ in the case that $e\equiv 1,5 \pmod 8$ and $a_{i}b_{i} \leq 0$ in the case that $e\equiv 3,7 \pmod 8$ for the coefficients $a_{i}$ and $b_{i}$ of the monomial $A^{i}$ in the two polynomials $\Psi_{L_{0}}(A)$ and $\Psi_{L_{1}}(A)$ respectively. Otherwise, we obtain that $\left|-\xi^{-2}\Psi_{L_{0}}\left(\xi\right) -\xi^{-6e-4}\Psi_{L_{1}}\left(\xi\right)\right| =  |\left|\Psi_{L_{0}}\left(\xi\right)\right| - \left|\Psi_{L_{1}}\left(\xi\right)\right||$ and this contradicts the fact that  $\left|-\xi^{-2}\Psi_{L_{0}}\left(\xi\right) -\xi^{-6e-4}\Psi_{L_{1}}\left(\xi\right)\right| =  \left|\Psi_{L_{0}}\left(\xi\right)\right| + \left|\Psi_{L_{1}}\left(\xi\right)\right|$. Thus we conclude that $\Phi_{L}(A)$  is alternating in this case. A similar argument can be applied for the polynomial $\Psi_{L}(A)$. 
\end{proof}

\begin{coro}\label{obstruction}
If $f_{L}(A)$ is not alternating, then $L$ is not mock-alternating virtual link.
\end{coro}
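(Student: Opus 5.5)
This corollary is the contrapositive of Theorem \ref{alternating}, so the plan is simply to argue by contradiction. Suppose $L$ were a mock-alternating virtual link. Then Theorem \ref{alternating} applies to $L$ and says that $f_{L}(A)$ is alternating, in the sense that both pieces $\Phi_{L}(A)$ and $\Psi_{L}(A)$ of the splitting in Theorem \ref{splitting} (normalized as in Remark \ref{simple}) are alternating. This contradicts the hypothesis that $f_{L}(A)$ is not alternating, so $L$ is not in $\mathcal{M}$.

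The only point to check is that the notion of ``alternating'' in the hypothesis matches the one in Theorem \ref{alternating}. Here it is the notion defined just before the theorem: we decompose $g(A)=g_{1}(A)+g_{2}(A)$ with $g_{1}\in\mathbb{Z}[A^{4},A^{-4}]$ and $g_{2}\in\mathbb{Z}[A^{4},A^{-4}]\cdot A^{2}$.

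This decomposition of $f_{L}(A)$ is unique, since the monomials of the two summands have exponents in disjoint residue classes mod $4$. Hence $g_{1}=\Phi_{L}$ and $g_{2}=\Psi_{L}$. Consequently, ``$f_{L}$ is not alternating'' means precisely that at least one of $\Phi_{L}$ or $\Psi_{L}$ fails the sign condition $(-1)^{\frac{j-k}{4}}a_{j}a_{k}\ge 0$. That is exactly the negation of the conclusion of Theorem \ref{alternating}.

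There is no real obstacle. The mild subtlety is that $f_{L}$ is an invariant of $L$ and does not depend on the chosen diagram, so the criterion is a genuine obstruction: any diagram may be used to compute $f_{L}(A)$ and test the sign pattern.
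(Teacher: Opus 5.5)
Your proposal is correct and matches the paper: Corollary~\ref{obstruction} is stated there as the immediate contrapositive of Theorem~\ref{alternating}, with no further argument. Your extra remark is a harmless clarification the paper leaves implicit: the splitting $f_L=\Phi_L+\Psi_L$ is unique because the exponents lie in distinct residue classes mod $4$, so the two uses of ``alternating'' agree.
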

As an application, we use the obstruction in Corollary \ref{obstruction} to list some of virtual knots that are not mock-alternating.
\renewcommand{\arraystretch}{1.3} 
\begin{table}[H]
\centering
\caption{Virtual knots of four crossings or less in the table of virtual knots in \cite{G} that are not mock-alternating virtual links according to the obstruction in Corollary \ref{obstruction} after the normalization $q^{\frac{1}{2}} = A^{-2}$.}
\begin{tabular}{|c|c|}
\hline
Jones-Kauffman polynomial & Link  \\
\hline
$1$ & 3.1, 3.7, 4.13, 4.19, 4.26, 4.35, 4.42, 4.46, 4.47, \\ & 4.55, 4.56, 4.59,  4.66,
4.67,4.72, 4.76, 4.77, \\ & 4.85, 4.93, 4.96, 4.97, 4.98, 4.102, 4.103, 4.106, 
4.107 \\
\hline
$q^{-5} - 2q^{-4} - 2q^{-\frac{7}{2}} + q^{-3} + 2q^{\frac{-5}{2}} + q^{-2}$ & 4.1, 4.3, 4.7, 4.53, 4.73\\
\hline
$- q^{\frac{-3}{2}} - q^{-1} + q^{\frac{-1}{2}} + 3 + q^{\frac{1}{2}} - q - q^{\frac{3}{2}}$ & 4.2, 4.6, 4.8, 4.12, 4.17, 4.32, 4.51, 4.58, 4.71, 4.75 \\
\hline
$q^{-5} - 2q^{-4} - 2q^{\frac{-7}{2}} + q^{-3} + 2q^{\frac{-5}{2}} + q^{-2}$ & 4.1, 4.3, 4.7, 4.53, 4.73 \\
\hline
$- q^{-4} - q^{\frac{-7}{2}} + q^{-3} + q^{\frac{-5}{2}} + q^{-2}$ &4.9, 4.15, 4.29, 4.37, 4.48, 4.61, 4.69, 4.78 \\
\hline 
$- q^{-3} - q^{\frac{-5}{2}} + q^{-2} + 2q^{\frac{-3}{2}} + q^{-1} - q^{\frac{-1}{2}}$ & 4.10, 4.16, 4.21, 4.24, 4.31, 4.36, 4.50, 4.57, 4.68, 4.70
 \\
\hline
\end{tabular}
\vspace{0.2cm}
\end{table}
\renewcommand{\arraystretch}{1.0} 
\begin{rem}
\begin{enumerate}
\item The virtual alternating link in the only remark of \cite{K} is not mock-alternating since the splitting of the polynomial $f_{L}(A)$ gives rise to a nonalternating polynomial. This implies that the class of alternating virtual links is not contained in the class of mock-alternating virtual links. 
\item The only virtual link of two components of crossing number one is mock-alternating virtual link at the only crossing of its diagram according to Definition \ref{def} even though that is not alternating virtual link. This implies that the class of mock-alternating virtual links is not contained in the class of alternating virtual links.
\end{enumerate}
\end{rem}
In the last part of this section, we use the twisting technique to generate an infinite family of mock-alternating virtual links from a given mock-alternating virtual link. This technique was described for the first time in \cite{CK} using rational tangles and later generalized in \cite{QCQ} to a product of rational tangles and to non-algebraic alternating tangles in \cite{CKa}. The main idea of this technique lies on replacing the crossing $c$ where the virtual link $L$ is mock-alternating by some tangle. Before we achieve such a goal, we recall and adopt the notion of extending a crossing by a rational tangle in \cite{CK} to virtual links as follows:
\begin{defn}
We say that a rational tangle of associated continued fraction $[a_{1},a_{2},\ldots,a_{n}]$ extends the crossing $c$ if it contains the crossing $c$ and we have $\epsilon(c) a_{i} \geq 1$ for all $i$ where the $\epsilon(c) = \pm 1$ based on whether the crossing is of type I or type II, respectively. In particular, this rational tangle is an alternating rational tangle. We refer the reader to \cite{KaLo} for further details on rational tangles and their continued fraction. 

\end{defn}

\begin{thm}\label{new2}
If $D$ is a mock-alternating virtual link diagram at some crossing $c$, then the link diagram  $D^{c\leftarrow r}$ obtained from $D$ by replacing the crossing $c$ by a rational tangle $r$ that extends $c$ is mock-alternating at each crossing of $r$.
\end{thm}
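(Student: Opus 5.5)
We will follow the strategy of Champanerkar--Kofman \cite{CK}, adapted to the mock-determinant. The plan is to first handle the basic twisting step, replacing $c$ by $n$ crossings in a row, and then induct on the length of the continued fraction $[a_1,\dots,a_n]$. The base operation is to replace the single crossing $c$ (say of type I) by a vertical or horizontal twist of $k\ge 1$ crossings of the same type. Call the resulting diagram $D_k$, with $D_1=D$. Smoothing any one crossing of the twist gives one smoothing equal to $D_{k-1}$. The other smoothing equals a smoothing of $D$ at $c$, namely $D_0$ or $D_1$ depending on the direction of twisting. This holds up to Reidemeister I moves, which untwist the remaining $k-1$ kinks and do not change the link type. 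Hence the sets of links arising from smoothings are known inductively.

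The core computation is to show $\mdet(D_k)=\mdet(D_{k-1})+\mdet(L_1)$ (or with $L_0$), which by induction gives $\mdet(D_k)=k\,\mdet(L_1)+\mdet(L_0)$. For this we will use Lemma \ref{jonespolynomial} together with the splitting of Theorem \ref{splitting}. We expand $f_{D_k}$ at a twist crossing as a combination $\pm A^{\alpha}f_{D_{k-1}}\pm A^{\beta}f_{L_1}$. We then track $\Phi$ and $\Psi$ separately, as in the proof of Theorem \ref{alternating}, so that
\[
|\Phi_{D_k}(\xi)|=|\Phi_{D_{k-1}}(\xi)|+|\ast_{L_1}(\xi)| \quad\text{and}\quad |\Psi_{D_k}(\xi)|=|\Psi_{D_{k-1}}(\xi)|+|\ast'_{L_1}(\xi)|,
\]
where $\ast,\ast'\in\{\Phi,\Psi\}$ are determined by the parity of $e$. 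The key point is that no cancellation occurs. We will establish this through a sign condition. From the hypothesis $\mdet(L)=\mdet(L_0)+\mdet(L_1)$, the same argument as in Theorem \ref{alternating} shows that the real (respectively imaginary) parts of the two terms $-\xi^{-2}f_{L_0}(\xi)$ and $-\xi^{-6e-4}f_{L_1}(\xi)$ have the same sign whenever both are nonzero. We then show by induction on $k$ that this sign alignment persists: the term contributed by $L_1$ at each new twist crossing has the same phase as the previous one, since the extra powers $A^{-6e'}$ with the new $e'$ differ by multiples of $8$ up to the sign from $(-1)$, evaluated at $\xi$. Together with $\mdet(D_{k-1}),\mdet(L_1)\ge 1$ and the fact that $L_0,L_1,D_{k-1}\in\mathcal M$, this shows that $D_k$ is mock-alternating at every crossing of the twist.

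For a general rational tangle $r$ extending $c$, we induct on $n$, viewing $r=[a_1,\dots,a_n]$ as obtained from $[a_1,\dots,a_{n-1}]$ by replacing a crossing of the last twist region by a twist in the perpendicular direction with $a_n$ crossings. All these crossings have the same type because $\epsilon(c)a_i\ge1$, so the resulting tangle is alternating. Applying the twisting step to the diagram given by the inductive hypothesis, which is mock-alternating at each crossing of the shorter tangle, yields mock-alternation at the new crossings. For the older crossings, we note that their smoothings are rational tangle replacements of $D_0$ or $D_1$, or simplify by Reidemeister I moves to shorter continued fractions. Their mock-determinants therefore add up, by the same no-cancellation identity.

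The main obstacle is the phase bookkeeping. The quantity $e$ changes with the crossing being smoothed and with the orientation induced on the smoothings, and virtual crossings can make the orientations of $D_0$ and $D_1$ incompatible with the twist. We will try first to verify that $e$ changes by an even amount (in fact by a multiple of $4$ after accounting for writhe shifts) along a twist, case by case over the four sign/parity cases as in Theorem \ref{alternating}. This would reduce everything to the single sign condition already forced by $\mdet(L)=\mdet(L_0)+\mdet(L_1)$.
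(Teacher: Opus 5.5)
Your architecture matches the paper's. You reduce to a twist at $c$, expand with Lemma~\ref{jonespolynomial} at a twist crossing, and substitute the expansion of $f_L$ at $c$. You then use the componentwise sign condition forced by $\mdet(L)=\mdet(L_0)+\mdet(L_1)$; your triangle-inequality derivation of it is correct. Finally you induct on the number of half-twists and on the length of the continued fraction.

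The gap is in the step you defer, and the target you set for it is wrong. Put $X=-\xi^{-2}f_{L_0}(\xi)$ and $Y=-\xi^{-6e-4}f_{L_1}(\xi)$, so that $f_L(\xi)=X+Y$, $\|X\|_1=\mdet(L_0)$ and $\|Y\|_1=\mdet(L_1)$. Let $D_k$ be the $k$-crossing twist and $e_k$ the quantity of Remark~\ref{simple2} at one of its crossings, with $e_1=e$. Using $-\xi^{-2}=i$,
\[
f_{D_k}(\xi)=i\,f_{D_{k-1}}(\xi)+Y_k,\qquad Y_k=-\xi^{-6e_k-4}f_{L_1}(\xi)=\xi^{-6(e_k-e)}\,Y .
\]
So the $L_1$-contribution already inside $f_{D_{k-1}}(\xi)$ is multiplied by $i$. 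The new term $Y_k$ must therefore be compared with $iY_{k-1}$, not $A^{-6e_k}$ with $A^{-6e_{k-1}}$ as in your sketch. Since $Y_k/(iY_{k-1})=\xi^{-6(e_k-e_{k-1})-2}$, the two agree exactly when $e_k-e_{k-1}\equiv 1 \pmod 4$.

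An even shift, which is what you plan to verify, misaligns them by $\pm i$. If $e$ moved by a multiple of $4$ at each step, the $L_1$-part of $f_{D_4}(\xi)$ would be $(i^3+i^2+i+1)Y=0$. That gives $\mdet(D^{c\leftarrow 4})=\mdet(L_0)$, and no sign condition can repair it.

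What is true, and what the paper uses, is $e'=e+1$, i.e.\ $e_k=e+k-1$. Take the case the paper treats: $c$ positive, twisted so that the oriented smoothing of a twist crossing returns $D_{k-1}$.
\begin{itemize}
\item All twist crossings have the sign of $c$, so $x(D_k)=x(D)$.
\item The unoriented smoothing of one twist crossing turns the other $k-1$ crossings into kinks. In each kink one strand is now reversed, so each kink is negative.
\item Hence the negative-crossing count of that smoothing is $x(D_1)+k-1=x(D)+e+k-1$.
\end{itemize}
Then $Y_k=i^{k-1}Y$ and $f_{D_k}(\xi)=i^{k-1}(X+kY)$. Your sign condition now gives $\mdet(D_k)=\mdet(L_0)+k\,\mdet(L_1)=\mdet(D_{k-1})+\mdet(L_1)$. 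For $k=2$ this is exactly the paper's factor $A^{-6e-6}(1-A^{-4})$, which equals $2\xi^{-6e-6}$ at $A=\xi$.

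So the phase alignment comes from an odd shift of $e$ combined with the factor $-A^{-2}$ in front of $f_{D_{k-1}}$, not from an even shift. Replace the parity claim with this explicit kink count. The same count must also be done for the perpendicular twists and for the older crossings in the continued-fraction step; the paper likewise only asserts that step. With that change, your plan runs as the paper's does.
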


\begin{proof}
  We may assume that $\epsilon(c) =1$  by taking the mirror image if it is needed. We prove the result by induction on the length of the continued fraction of the given rational tangle. If the length is one, then we have an integer tangle in such a case. Now we prove this case by induction on the number of crossings of this integer tangle. Both such inductions depend on proving the result for the link diagram $D^{c\leftarrow 2}$ obtained by replacing the crossing $c$ by an integer tangle of two crossings that extends $c$. In other words, the link diagram $D^{c\leftarrow 2}$ is obtained from the link diagram $D$ by replacing the crossing $c$ by two vertical or horizontal positive half-twists. Now we consider the first case and second case can be treated in a similar manner. If we smooth the second crossing of such an integer tangle, we obtain the link diagrams $D_{0}^{c\leftarrow 2}$ and $D_{1}^{c\leftarrow 2}$ which are exactly the mock-alternating diagrams $D$ and $D_{1}$ at $c$ respectively.
 
We denote $L$, $L_0$, $L_1$ and $L^{c\leftarrow r}$ the links represented by the diagrams $D$, $D_0$, $D_1$ and $D^{c\leftarrow r}$ respectively.
To prove the result, it is left to prove that $\mdet(L^{c\leftarrow 2}) = \mdet(L_{0}^{c\leftarrow 2}) + \mdet(L_{1}^{c\leftarrow 2}) = \mdet(L) + \mdet(L_{1}) = \mdet(L_{0}) + 2\mdet(L_{1})$. As a consequence of Lemma \ref{jonespolynomial}, we obtain 
\begin{align*}
 f_{L^{c\leftarrow 2}}(A) & = -A^{-2}f_{L_{0}^{c\leftarrow 2}}(A)-A^{-6e'-4}f_{L_{1}^{c\leftarrow 2}}(A) \\
& = -A^{-2}f_{L}(A)-A^{-6e'-4}f_{L_{1}}(A) \\
& = -A^{-2}\left(\Phi_{L}(A) + \Psi_{L}(A)\right)-A^{-6e'-4}\left(\Phi_{L_{1}}(A) + \Psi_{L_{1}}(A)\right) \\ &
=  -A^{-2}\left(-A^{-2}\Psi_{L_{0}}(A)-A^{-6e-4}\Psi_{L_{1}}(A) -A^{-2}\Phi_{L_{0}}(A)-A^{-6e-4}\Phi_{L_{1}}(A)\right) \\ & \ \  \ -A^{-6e'-4}\left(\Phi_{L_{1}}(A) + \Psi_{L_{1}}(A)\right). 
\end{align*}

Note that \begin{align*}
\Phi_{L^{c\leftarrow 2}}(A) & = A^{-4}\Phi_{L_{0}}(A)  +A^{-6e-6}\Phi_{L_{1}}(A)  - A^{-6e'-4}\Phi_{L_{1}}(A) \\ & = A^{-4}\Phi_{L_{0}}(A)  +A^{-6e-6}(1-A^{-4})\Phi_{L_{1}}(A) \\ & = -A^{-2}\left(-A^{-2}\Phi_{L_{0}}(A)  - A^{-6e-4}(1-A^{-4})\Phi_{L_{1}}(A)\right) .
\end{align*}
 and  
\begin{align*}
\Psi_{L^{c\leftarrow 2}}(A) & = A^{-4}\Psi_{L_{0}}(A)  +A^{-6e-6}\Psi_{L_{1}}(A)  - A^{-6e'-4}\Psi_{L_{1}}(A) \\ & = A^{-4}\Psi_{L_{0}}(A)  +A^{-6e-6}(1-A^{-4})\Psi_{L_{1}}(A) \\ &= -A^{-2}\left(-A^{-2}\Psi_{L_{0}}(A)  - A^{-6e-4}(1-A^{-4})\Psi_{L_{1}}(A)\right).
\end{align*}

The last two equations hold since $e'= e+1$ when the crossing $c$ in the link diagram $D$ is positive and the corresponding crossings in $D^{c\leftarrow 2}$ are also positive and $e$ is odd. Now we know that $\mdet(L) = \mdet(L_{0}) + \mdet(L_{1})$ which is equivalent to 
\begin{align*}
 |\Phi_{L}(\xi)| +   |\Psi_{L}(\xi)| & = \left|-\xi^{-2}\Psi_{L_{0}}\left(\xi\right) -\xi^{-6e-4}\Psi_{L_{1}}\left(\xi\right)\right| +  \left|-\xi^{-2}\Phi_{L_{0}}\left(\xi\right) -\xi^{-6e-4}\Phi_{L_{1}}\left(\xi\right)\right| \\ & = \mdet(L) \\ & =  \mdet(L_{0}) +  \mdet(L_{1}) \\ & =   \left|\Phi_{L_{0}}\left(\xi\right)\right|  + \left|\Psi_{L_{0}}\left(\xi\right)\right|  +  \left|\Phi_{L_{1}}\left(\xi\right)\right| + \left|\Psi_{L_{1}}\left(\xi\right)\right|.
\end{align*}

This implies that   
\begin{align*}
  \left|-\xi^{-2}\Psi_{L_{0}}\left(\xi\right) -\xi^{-6e-4}\Psi_{L_{1}}\left(\xi\right)\right| +  \left|-\xi^{-2}\Phi_{L_{0}}\left(\xi\right) -\xi^{-6e-4}\Phi_{L_{1}}\left(\xi\right)\right| = \\   \left|\Phi_{L_{0}}\left(\xi\right)\right|  + \left|\Psi_{L_{0}}\left(\xi\right)\right|  +  \left|\Phi_{L_{1}}\left(\xi\right)\right| & + \left|\Psi_{L_{1}}\left(\xi\right)\right|.
\end{align*}

Thus we conclude that  \[ \left|-\xi^{-2}\Psi_{L_{0}}\left(\xi\right) -\xi^{-6e-4}\Psi_{L_{1}}\left(\xi\right)\right| =  \left|\Psi_{L_{0}}\left(\xi\right)\right| + \left|\Psi_{L_{1}}\left(\xi\right)\right|,\] and 
 \[  \left|-\xi^{-2}\Phi_{L_{0}}\left(\xi\right) -\xi^{-6e-4}\Phi_{L_{1}}\left(\xi\right)\right|  = \left|\Phi_{L_{0}}\left(\xi\right)\right|   + \left|\Phi_{L_{1}}\left(\xi\right)\right|,\] since the polynomials $\Psi_{L_{0}}(A), \Psi_{L_{1}}(A), \Phi_{L_{0}}(A),$ and $\Phi_{L_{1}}(A)$ are all alternating.

Using similar argument as in the proof of Theorem \ref{alternating}, we can assume that $a_{i}b_{i} \geq 0$ in the case that $e\equiv 1,5 \pmod 8$ and $a_{i}b_{i} \leq 0$ in the case that $e\equiv 3,7 \pmod 8$ for the coefficients $a_{i}$ and $b_{i}$ of the monomial $A^{i}$ in the two polynomials $\Psi_{L_{0}}(A)$ and $\Psi_{L_{1}}(A)$ respectively. 
Clearly, this would imply that  the coefficients $a_{i}$ and $b_{i}$ of any monomial $A^{i}$ in the two polynomials $ -A^{-2}\Phi_{L_{0}}(A)$ and $ -A^{-6e-4}(1-A^{-4})\Phi_{L_{1}}(A)$ respectively satisfy $a_{i}b_{j}\geq0 $. Thus we conclude that 
\begin{align*}
\left|\Phi_{L^{c\leftarrow 2}}(\xi)\right| & =  \left|-\xi^{-2}\left(-\xi^{-2}\Phi_{L_{0}}(\xi)  - \xi^{-6e-4}(1-\xi^{-4})\Phi_{L_{1}}(\xi)\right)\right| \\ & = \left |-\xi^{-2}\Phi_{L_{0}}(\xi)  - 2\xi^{-6e-4}\Phi_{L_{1}}(\xi)\right| \\ & = \left|\Phi_{L_{0}}(\xi)\right| + 2 \left|\Phi_{L_{1}}(\xi)\right|.
 \end{align*}

 A similar argument can be used to prove that
 \begin{align*}
\left|\Psi_{L^{c\leftarrow 2}}(\xi)\right| & =  \left|-\xi^{-2}\left(-\xi^{-2}\Psi_{L_{0}}(\xi)  - \xi^{-6e-4}(1-\xi^{-4})\Psi_{L_{1}}(\xi)\right)\right| \\ & =  \left|-\xi^{-2}\Psi_{L_{0}}(\xi)  - 2\xi^{-6e-4}\Psi_{L_{1}}(\xi)\right| \\ & = \left|\Psi_{L_{0}}(\xi)\right| + 2 \left|\Psi_{L_{1}}(\xi)\right|.
\end{align*}

This proves the required and thus we conclude that $L^{c\leftarrow 2}$ is mock-alternating. Now we  assume that the result holds for any integer tangle that replaces the crossing $c$ by $n-1$ half-twists and it is mock-alternating at the last crossing of such integer tangle. Now we incorporate the previous argument for $L^{c\leftarrow 2}$ to show the result when the crossing $c$ is replaced by an integer tangle of $n$ half-twists. At the end, the result holds for any rational tangle that extends the crossing $c$ using the induction hypothesis and the fact that any rational tangle of continued fraction $[a_{1},a_{2}, \ldots, a_{n}]$ can be expressed by the continued fraction $[a_{1},a_{2}, \ldots, a_{n}-1,1]$. This can be done by replacing the last crossing in the latter continued fraction by any integer tangle using the first induction.

\end{proof}
In the statement of the following corollary, we use the notations involved in the last proof.
\begin{coro}
Let $L$ be a mock-alternating virtual link. Let $D$ be a virtual link diagram of $L$ which is mock-alternating at the crossing $c$, then
\[ \mdet(L^{c\leftarrow r}) =
  \begin{cases}
  \alpha \mdet(L_{0}) + \beta \mdet(L_{1}),
     & \text{if $\epsilon(c) = -1 $},
     \\
    {\alpha \mdet(L_{1}) + \beta \mdet(L_{0}}), & \text{if $\epsilon(c) = 1 $},
  \end{cases}
\]
where $r$ is a rational tangle of slope $\frac{\beta}{\alpha}$ that extends
$c$.
\end{coro}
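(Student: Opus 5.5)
The formula will follow from the additivity established in the proof of Theorem \ref{new2}, combined with the standard recursion for the numerator and denominator of a continued fraction. I would argue by induction on the number of crossings of $r$, following the same sequence of moves used there. Assume first $\epsilon(c)=1$; the case $\epsilon(c)=-1$ follows by taking mirror images, which interchanges the roles of the two smoothings and hence of $\alpha$ and $\beta$. The key input is that, by Theorem \ref{new2}, $D^{c\leftarrow r}$ is mock-alternating at every crossing of $r$. So for any crossing $c'$ of $r$,
\[
\mdet(L^{c\leftarrow r})=\mdet\bigl(L^{c\leftarrow r}_{0}\bigr)+\mdet\bigl(L^{c\leftarrow r}_{1}\bigr),
\]
where the two smoothings at $c'$ are again of the form $D^{c\leftarrow r'}$ and $D^{c\leftarrow r''}$ for rational tangles $r',r''$ with fewer crossings. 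Once a tangle is reduced to $0$ or $\infty$, the diagram becomes $D_{1}$ or $D_{0}$ (according to the convention fixed in Figure \ref{figure}).

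I would choose $c'$ to be the last crossing of the last twist box $a_n$ in $[a_1,\dots,a_n]$. Smoothing it one way gives the tangle $[a_1,\dots,a_n-1]$. Smoothing it the other way gives $[a_1,\dots,a_{n-1}]$, since removing the box entirely collapses that twist. In terms of slopes, the fractions satisfy the convergent recursion $p_n=a_np_{n-1}+p_{n-2}$, $q_n=a_nq_{n-1}+q_{n-2}$. The single-crossing smoothing therefore gives $p(r)=p(r')+p(r'')$ and $q(r)=q(r')+q(r'')$, and the denominators and numerators of the two smoothed slopes add to those of $r$. Since $\mdet$ also adds, by the displayed identity, the claimed linear expression $\alpha\,\mdet(L_1)+\beta\,\mdet(L_0)$ is additive under the same decomposition. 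By induction it therefore holds for $r$ once it holds for $r'$ and $r''$.

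The base cases are the integer tangles, which I would treat by the inner induction of Theorem \ref{new2}. For $r=1$ we have $\mdet(L)=\mdet(L_0)+\mdet(L_1)$, which matches $\alpha=\beta=1$. For $r=n$, the proof of Theorem \ref{new2} shows $\mdet(L^{c\leftarrow n})=\mdet(L^{c\leftarrow n-1})+\mdet(L_1)$, giving $\mdet(L_0)+n\,\mdet(L_1)$; here the slope is $n/1$, and this is the bookkeeping that fixes which smoothing goes with $\alpha$. The degenerate tangles $0$ and $\infty$ give $L_1$ and $L_0$, and they serve as the formal starting values $(\alpha,\beta)=(1,0)$ and $(0,1)$ of the recursion.

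The main obstacle is convention-matching rather than any new estimate. One must check that, with the paper's type~I convention and the slope $\beta/\alpha$, the smoothing of the last crossing in the twist box really produces the pair of convergents whose numerators and denominators add in the stated order. In particular, horizontal and vertical twists alternate with the parity of $n$, so which of $D_0$ or $D_1$ arises from the collapsed tangle alternates as well. I would settle this by checking small cases, $[2]$, $[1,1]$, $[2,1]$, against the formula in the proof of Theorem \ref{new2}. If the orientation convention flips, the same argument applies with $\alpha$ and $\beta$ swapped, which is exactly the dichotomy between $\epsilon(c)=1$ and $\epsilon(c)=-1$ in the statement.
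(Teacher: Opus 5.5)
Your route is essentially the one the paper intends; the corollary is left there as a consequence of the proof of Theorem~\ref{new2}. It combines additivity of $\mdet$ at a crossing of $r$, which Theorem~\ref{new2} supplies, with the continued-fraction fact that smoothing a crossing of the box $a_n$ splits $[a_1,\dots,a_n]$ into $[a_1,\dots,a_n-1]$ and $[a_1,\dots,a_{n-1}]$, whose numerators and denominators add. This needs $a_n$ to be the innermost box, i.e.\ the deepest partial quotient, as your convergent recursion presupposes; a crossing in the outermost box would instead give a connected sum with a two-bridge link, not a rational tangle. Your induction on crossings, started from $0\mapsto D_1$ with $(\alpha,\beta)=(1,0)$ and $\infty\mapsto D_0$ with $(0,1)$, already yields $\mdet(L^{c\leftarrow r})=\alpha\,\mdet(L_1)+\beta\,\mdet(L_0)$ when $\epsilon(c)=1$. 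The mirror reduction then handles $\epsilon(c)=-1$, because $f_{\overline{L}}(\xi)=\overline{f_L(\xi)}$ and mirroring exchanges $L_0$ and $L_1$.

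The integer-tangle paragraph, however, is wrong as written and contradicts the rest of your argument. The twists in the proof of Theorem~\ref{new2} are vertical, and the smoothing that collapses the box there is $D_1=D^{c\leftarrow 0}$. By your own base assignment the slope is then the mediant of $1/(n-1)$ and $0/1$, namely $1/n$. So $(\alpha,\beta)=(n,1)$, and the formula gives $n\,\mdet(L_1)+\mdet(L_0)$, in agreement with the paper. With slope $n/1$, as you wrote, the target formula would require $\mdet(L_1)+n\,\mdet(L_0)$. That is what horizontal twists give, since they collapse to $D_0=D^{c\leftarrow\infty}$. Following your bookkeeping step would therefore attach $\alpha$ to the wrong smoothing.

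Your closing fallback is also not valid. $\epsilon(c)$ is fixed by the crossing, so a reversed convention check would make the stated $\epsilon(c)=1$ formula false; it would not turn into the $\epsilon(c)=-1$ case. The fix is to delete the integer base case, which the crossing induction already subsumes. Then justify the one convention that matters: $D^{c\leftarrow 0}=D_1$ and $D^{c\leftarrow\infty}=D_0$ for a type I crossing. The paper's identification $D^{c\leftarrow 2}_1=D_1$ for two vertical half-twists confirms this. Your parity worry about which smoothing is the collapsed one then disappears, since the additivity $\mdet(L^{c\leftarrow r})=\mdet(L^{c\leftarrow r'})+\mdet(L^{c\leftarrow r''})$ is symmetric in the two smoothings.
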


We like to enclose this section with the following two questions motivated by the conjectures for the class of quasi-alternating links in \cite{G2,QQJ}, respectively. 

\begin{ques}\label{main2}
Is there only finitely many mock-alternating virtual links of given mock-determinant?
\end{ques}
\begin{ques}\label{main1}
Does the mock-determinant represent an upper bound of the crossing number for any mock-alternating or alternating virtual link?
\end{ques}

\section{Properties of the mock-Determinant}
\begin{prop}\label{split}
If $L$ is a split link, then $\mdet(L) = 0$.
\end{prop}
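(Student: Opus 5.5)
A split link $L$ has a diagram $D$ that is a disjoint union $D = D' \sqcup D''$ of two nonempty virtual link diagrams, meaning no classical crossing of $D$ involves strands of both pieces. I will use the multiplicativity of the Kauffman bracket under disjoint union, which holds verbatim in the virtual setting.

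The identity to establish first is
\[ \left\langle D' \sqcup D'' \right\rangle = (-A^{-2}-A^{2})\left\langle D'\right\rangle \left\langle D''\right\rangle. \]
I prove it by induction on the number of classical crossings of $D'$, using relation (3) of the definition at a crossing of $D'$; the smoothings keep the two pieces disjoint. Once $D'$ has no classical crossings, it is a collection of virtual circles. Its bracket is then a power of $\delta = -A^{-2}-A^{2}$, namely one factor for each circle beyond the first, by relations (1) and (2). Relation (2), applied once per circle, also gives $\left\langle D'\sqcup D''\right\rangle = \delta^{k}\left\langle D''\right\rangle$ when $D'$ consists of $k$ circles, which matches the formula. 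Virtual crossings inside or between the pieces play no role, because the state sum only involves classical crossings. The writhe is additive, $w(D)=w(D')+w(D'')$, and therefore
\[ f_L(A) = (-A^{-2}-A^{2})\, f_{L'}(A)\, f_{L''}(A). \]

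Next I evaluate at $\xi=\frac{1+i}{\sqrt2}$. Since $\xi^{2}=i$ and $\xi^{-2}=-i$, we get $-\xi^{-2}-\xi^{2} = i - i = 0$. Hence $f_L(\xi)=0$, so both $\Re(f_L(\xi))$ and $\Im(f_L(\xi))$ vanish, and $\mdet(L)=\|f_L(\xi)\|_1=0$ by Definition \ref{determinant}. Equivalently, $\Phi_L(\xi)=\Psi_L(\xi)=0$, because these are respectively the real and imaginary parts of $f_L(\xi)$ by Theorem \ref{splitting} and Remark \ref{simple}.

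The only point needing care is the multiplicativity formula: I must check that the state-sum definition is well-defined for virtual diagrams and that virtual crossings joining the two pieces do not affect the loop counts. Loops in a state are counted as immersed circles, with virtual crossings ignored, so every loop lies entirely in one piece. The state sum of $D$ therefore factors as the product of the state sums of $D'$ and $D''$. The extra factor of $\delta$ comes from the normalization $\left\langle \bigcirc\right\rangle=1$, because the unnormalized bracket $\sum_S A^{a(S)-b(S)}\delta^{|S|}$ is multiplicative.
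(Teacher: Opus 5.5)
Your proof is correct and follows essentially the same route as the paper: both rest on $f_L(A)=(-A^{-2}-A^{2})f_{L'}(A)f_{L''}(A)$ and on the vanishing of $-\xi^{-2}-\xi^{2}$, except that the paper only asserts the product formula while you justify it by the bracket induction. The one difference is cosmetic: the paper factors $\Phi_L$ and $\Psi_L$ separately through $-A^{-2}-A^{2}$, whereas you note directly that $f_L(\xi)=0$, which is slightly cleaner.
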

\begin{proof}
If $L$ splits into two sublinks $L_{1}$ and $L_{2}$, then it is not too hard to see that $f_{L}(A) = (-A^{-2}-A^{2})f_{L_{1}}(A)f_{L_{2}}(A)$. As a consequence of this, we obtain $\Phi_{L}(A) =  (-A^{-2}-A^{2})g(A)$ and $\Psi_{L}(A) =  (-A^{-2}-A^{2})h(A)$ for some polynomials $g(A)\in \mathbb{Z}[A^{4},A^{-4}]\cdot A^{2}$ and $h(A)\in \mathbb{Z}[A^{4},A^{-4}]$.  Now $\mdet(L) = ||f_{L}(\xi)||_{1} = \left|\Phi_{L}\left(\xi\right)\right| + \left|\Psi_{L}\left(\xi\right)\right| = 0 + 0 = 0$.
\end{proof}

\begin{prop}
If $L$ is the connected sum of the two links $L_{1}$ and $L_{2}$, then $\mdet(L) \leq \mdet(L_{1}) \mdet(L_{2})$ and the equality holds if one of the two links is checkerboard colorable virtual link.
\end{prop}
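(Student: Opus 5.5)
The plan is to use multiplicativity of the Jones--Kauffman polynomial under connected sum. For classical links this is standard. For a connected sum of virtual link diagrams $D_{1}\# D_{2}$, taken at a chosen pair of points, the bracket state sum factors. Each state of $D_{1}\# D_{2}$ is a pair of states $(s_{1},s_{2})$. The number of circles satisfies $|s| = |s_{1}|+|s_{2}|-1$, because the two circles through the connecting band merge into one. The writhe is additive. Hence
\[
f_{L}(A) = f_{L_{1}}(A)\, f_{L_{2}}(A),
\]
with the normalization $\langle \bigcirc \rangle =1$. Although the virtual connected sum depends on the choice of diagrams and connection points, this identity holds for whichever diagram realizes $L$.

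Next I would write $f_{L_{j}} = \Phi_{L_{j}}+\Psi_{L_{j}}$ as in Remark \ref{simple} and evaluate at $\xi$. Put $a_{j}=\Phi_{L_{j}}(\xi)\in\mathbb{R}$ and $b_{j}=-i\Psi_{L_{j}}(\xi)\in\mathbb{R}$. Then $f_{L}(\xi)=(a_{1}+ib_{1})(a_{2}+ib_{2})$, so
\[
\mdet(L)=|a_{1}a_{2}-b_{1}b_{2}|+|a_{1}b_{2}+a_{2}b_{1}|.
\]
The triangle inequality gives
\[
\mdet(L)\le |a_{1}a_{2}|+|b_{1}b_{2}|+|a_{1}b_{2}|+|a_{2}b_{1}| =(|a_{1}|+|b_{1}|)(|a_{2}|+|b_{2}|)=\mdet(L_{1})\mdet(L_{2}).
\]
Equivalently, $\|zw\|_{1}\le\|z\|_{1}\|w\|_{1}$ for complex numbers $z,w$. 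This proves the inequality.

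For the equality case, suppose $L_{1}$ is checkerboard colorable. The key step, and the main obstacle, is to show that then one of $\Phi_{L_{1}}$ or $\Psi_{L_{1}}$ vanishes identically, so $f_{L_{1}}(\xi)$ is purely real or purely imaginary. I would try to get this from the state sum.
\begin{itemize}
\item For a checkerboard colorable diagram, changing a smoothing at one crossing always changes the number of state circles by exactly $\pm1$. It never leaves the count unchanged, since the latter happens only for the non-orientable single-cycle smoothings that checkerboard colorability rules out.
\item Consequently $|s|-\sigma(s)$ has constant parity over all states $s$, where $\sigma(s)$ counts the $A$-smoothings minus the $A^{-1}$-smoothings.
\item Therefore every monomial $A^{\sigma(s)}(-A^{2}-A^{-2})^{|s|-1}$ lies in one fixed coset $\mathbb{Z}[A^{4},A^{-4}]\cdot A^{k}$.
\item Multiplying by $(-A)^{-3w}$ keeps us in a single coset, modulo $A^{4}$, among $A^{0}$ and $A^{2}$.
\end{itemize}
This is the same mechanism that makes the classical case work. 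Alternatively, I could cite the known fact, from \cite{KM} or \cite{abchir}, that the Kauffman bracket of a checkerboard colorable link lies in a single such coset.

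Once $b_{1}=0$, the formula gives
\[
\mdet(L)=|a_{1}a_{2}|+|a_{1}b_{2}|=|a_{1}|(|a_{2}|+|b_{2}|)=\mdet(L_{1})\mdet(L_{2}).
\]
The case $a_{1}=0$ is symmetric. This gives the equality.
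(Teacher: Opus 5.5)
Your proposal is correct and is essentially the paper's proof: multiplicativity $f_L=f_{L_1}f_{L_2}$, the triangle inequality on the real and imaginary parts at $\xi$, and, for equality, the vanishing of one of $\Phi_{L_1},\Psi_{L_1}$ when $L_1$ is checkerboard colorable, which the paper simply cites as Kamada's theorem \cite[Theorem\,1]{K}.

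One slip in your optional state-sum derivation of that last fact: $|s|-\sigma(s)$ does not have constant parity. A single resmoothing changes $\sigma$ by $\pm2$ and $|s|$ by $\pm1$, so $|s|-\sigma(s)$ flips parity every time. The correct invariant is $2|s|+\sigma(s) \bmod 4$, equivalently the parity of $|s|$ plus the number of $A$-smoothings. That invariant is exactly what places every term $A^{\sigma(s)}(-A^{2}-A^{-2})^{|s|-1}$ in a single coset of $\mathbb{Z}[A^{4},A^{-4}]$.
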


\begin{proof}
If the link $L$ is the connected sum of the links $L_{1}$ and $L_{2}$ that is $L = L_{1}\#L_{2}$, then we know that $f_{L}(A) = f_{L_{1}}(A) f_{L_{2}}(A)$. In such a case, we obtain $\Phi_{L}(A) = \Phi_{L_{1}}(A)\Phi_{L_{2}}(A) + \Psi_{L_{1}}(A)\Psi_{L_{2}}(A)$ and  $\Psi_{L}(A) = \Psi_{L_{1}}(A)\Phi_{L_{2}}(A) + \Phi_{L_{1}}(A)\Psi_{L_{2}}(A)$. Now we have 
\begin{align*}
\mdet(L) & =  ||f_{L}(\xi)||_{1} = \left|\Phi_{L}\left(\xi\right)\right| + \left|\Psi_{L}\left(\xi\right)\right| \\ &
 =  \left|\Phi_{L_{1}}\left(\xi\right)\Phi_{L_{2}}\left(\xi\right)+ \Psi_{L_{1}}\left(\xi\right)\Psi_{L_{2}}\left(\xi\right)\right| +\left|\Psi_{L_{1}}\left(\xi\right)\Phi_{L_{2}}\left(\xi\right)+ \Phi_{L_{1}}\left(\xi\right)\Psi_{L_{2}}\left(\xi\right)\right| \\ &
\leq  \left|\Phi_{L_{1}}\left(\xi\right)\Phi_{L_{2}}\left(\xi\right)\right|+\left|\Psi_{L_{1}}\left(\xi\right)\Psi_{L_{2}}\left(\xi\right)\right| +\left|\Psi_{L_{1}}\left(\xi\right)\Phi_{L_{2}}\left(\xi\right)\right|+\left| \Phi_{L_{1}}\left(\xi\right)\Psi_{L_{2}}\left(\xi\right)\right| \\ &
=  \left|\Phi_{L_{1}}\left(\xi\right)\right|\left|\Phi_{L_{2}}\left(\xi\right)\right|+\left|\Psi_{L_{1}}\left(\xi\right)\right|\left|\Psi_{L_{2}}\left(\xi\right)\right| +\left|\Psi_{L_{1}}\left(\xi\right)\right|\left|\Phi_{L_{2}}\left(\xi\right)\right| + \left| \Phi_{L_{1}}\left(\xi\right)\right|\left|\Psi_{L_{2}}\left(\xi\right)\right| \\ & 
=  \left(\left|\Phi_{L_{1}}(\xi)\right|+ \left|\Psi_{L_{1}}(\xi)\right| \right) \left(\left|\Phi_{L_{2}}(\xi)\right| + \left|\Psi_{L_{2}}(\xi)\right|\right) \\& = ||f_{L_{1}}(\xi)||_{1}||f_{L_{2}}(\xi)||_{1} = \mdet(L_{1})\mdet(L_{2}).
\end{align*}
In the case that one of the two components is checkerboard colorable virtual link, then the triangle inequality becomes an equality because one of the four polynomials $\Phi_{L_{1}}(A), \Phi_{L_{2}}(A),$ $\Psi_{L_{1}}(A)$ or $\Psi_{L_{2}}(A)$ will be zero according to the result in \cite[Theorem\,1]{K}.
\end{proof}

\begin{prop}
If the number of components of $L$ is odd or even then $\mdet(L)$ is odd or even respectively.
\end{prop}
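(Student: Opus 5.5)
Since $\mdet(L)=|\Phi_L(\xi)|+|\Psi_L(\xi)|$, only the parity of this integer matters, and parity can be detected by reducing modulo $2$. The plan is to show that $\mdet(L)\equiv f_L(1)$ and that $f_L(1)$ has a fixed value depending only on the number of components $\mu$. We can then argue by induction on the number of real crossings.

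\textbf{Step 1: $\mdet(L)$ is an integer with the right parity.} Write $\Phi_L(A)=\sum_k a_k A^{4k}$ and $\Psi_L(A)=\sum_k b_k A^{4k+2}$ as in Remark \ref{simple}. Since $\xi^4=-1$ and $\xi^2=i$, we get $\Phi_L(\xi)=\sum_k (-1)^k a_k$ and $\Psi_L(\xi)=i\sum_k(-1)^k b_k$. Hence
\[
\mdet(L)=\Bigl|\sum_k (-1)^k a_k\Bigr|+\Bigl|\sum_k(-1)^k b_k\Bigr|\equiv \sum_k a_k+\sum_k b_k = f_L(1)\pmod 2,
\]
using $|x|\equiv x$ and $-1\equiv 1$ modulo $2$.

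\textbf{Step 2: $f_L(1)$ modulo $2$.} At $A=1$ the bracket relations become $\langle\bigcirc\rangle=1$, $\langle\bigcirc\cup D\rangle=-2\langle D\rangle$ and $\langle D\rangle=\langle D_0\rangle+\langle D_1\rangle$. Virtual crossings play no role in any of these relations. Expanding all real crossings gives $f_L(1)=\pm\sum_s (-2)^{|s|-1}$ over the states $s$. Modulo $2$, only the states with exactly one circle contribute, so
\[
f_L(1)\equiv \#\{\text{states of } D \text{ with one circle}\}\pmod 2 .
\]

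\textbf{Step 3: the parity matches $\mu$.} For classical links we know $f_L(1)=(-2)^{\mu-1}$, but for virtual links the value at $A=1$ need not be this. We therefore work only modulo $2$ and aim to show that the number of one-circle states is odd exactly when $\mu=1$, at least for the comparisons below.

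The intended argument is induction on the number $n$ of real crossings. When $n=0$ the diagram is $\mu$ disjoint circles, up to virtual moves, and $(-2)^{\mu-1}$ has the claimed parity. For the inductive step, pick a real crossing $c$ and compare $D$ with the diagram $D'$ obtained by switching $c$. We have $\langle D\rangle|_{A=1}=\langle D_0\rangle+\langle D_1\rangle=\langle D'\rangle|_{A=1}$, because switching a crossing swaps the roles of the two smoothings. So the parity of $f_L(1)$ is unchanged by crossing changes. Likewise, replacing a real crossing by a virtual crossing changes the value by $\langle D_0\rangle+\langle D_1\rangle-\langle D_v\rangle$.

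The plan is then to use crossing changes together with virtualization to reduce $D$ to a diagram with no real crossings. Crossing changes and virtual moves preserve the number of components, so the reduced diagram has the same $\mu$.

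\textbf{Main obstacle.} The difficulty is that virtualizing a crossing is not obviously parity-preserving. We need $\langle D_0\rangle+\langle D_1\rangle\equiv\langle D_v\rangle\pmod 2$ at $A=1$, where $D_v$ is $D$ with $c$ made virtual.

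My first attempt would be to compare states directly. Among $D_0$, $D_1$ and $D_v$, exactly one is the orientation-consistent smoothing, which keeps the component count, unless $c$ involves two different components. Fixing the smoothings at all other crossings, the three local resolutions have circle counts related by $\pm1$ in a pattern that holds modulo the relevant cases. I would carry out this local case check: whether $c$ is a self-crossing or lies between two components, and the orientable versus non-orientable smoothing. The goal is to show that the count of one-circle states changes by an even amount.

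If this check fails, the fallback is the Kamada–Miyazawa splitting (Theorem \ref{splitting}) together with the value of the bracket at $A=i^{1/2}$. But this route yields only the $\mu$-dependence of the exponents, so the local state count is the preferred approach.
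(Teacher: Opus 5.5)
Your Steps 1 and 2 are correct: $\mdet(L)\equiv f_L(1)\pmod 2$, and $f_L(1)$ is congruent to the number of one-circle states. The argument breaks where you claim that $(-2)^{\mu-1}$ ``has the claimed parity''. For odd $\mu\ge 3$ this number is even, and your own Step 3 goal (odd exactly when $\mu=1$) already contradicts the proposition.

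In fact the proposition is false as stated, so no argument can close this gap. The $3$-component unlink has $f=(-A^{2}-A^{-2})^{2}$, which vanishes at $\xi$ because $\xi^{2}=i$. So its $\mdet$ is $0$, in agreement with Proposition~\ref{split}. For a nonsplit example, the connected sum of two Hopf links is classical with $\mu=3$ and $\det=4$. What your method actually proves is the corrected statement: $\mdet(L)$ is odd if and only if $L$ is a knot.

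The paper's proof does not rescue the original claim either. It evaluates $A^{4}f_{L_+}-A^{-4}f_{L_-}=(A^{-2}-A^{2})f_{L_0}$ at $\xi$ to get $f_{L_+}(\xi)-f_{L_-}(\xi)=2i\,f_{L_0}(\xi)$. So crossing changes preserve the parities of the real and imaginary parts, and the paper then reduces to a crossingless diagram. That base case has $\mdet=0$ for every $\mu\ge 2$. Moreover, crossing changes alone cannot remove all real crossings when the underlying flat virtual link is nontrivial. Your use of virtualization avoids that second problem.

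Your ``main obstacle'' disappears if you drop crossing changes and virtualization altogether. At $A=1$ one has $\langle D\rangle=\langle D_0\rangle+\langle D_1\rangle$, and $\{D_0,D_1\}$ consists of the oriented and the unoriented smoothing. Tracing arcs through the crossing, which is valid for virtual diagrams, gives the component counts:
\begin{itemize}
\item at a self-crossing, the oriented smoothing has $\mu+1$ components and the other has $\mu$;
\item at a crossing between two components, both smoothings have $\mu-1$.
\end{itemize}
Induction on the number $n$ of real crossings then gives $\langle D\rangle|_{A=1}=(-1)^{n}(-2)^{\mu-1}$, using $(-2)^{\mu}+(-2)^{\mu-1}=-(-2)^{\mu-1}$ and $2(-2)^{\mu-2}=-(-2)^{\mu-1}$. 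Hence $f_L(1)=\pm(-2)^{\mu-1}$ for every virtual link, contrary to your remark that this may fail. So $\mdet(L)\equiv(-2)^{\mu-1}\pmod 2$, and virtualization merely flips the sign.
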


\begin{proof}
The Jones-Kauffman polynomial satisfies the well-known skein relation: 
$A^{4}f_{L_{+}}(A) - A^{-4} f_{L_{-}}(A) = (A^{-2}-A^{2})f_{L_{0}}(A)$, where $L_{-}$ and $L_{+}$ are the link diagrams given in Figure \ref{Diagram1} respectively.
Now evaluating such skein relation at $A= \xi$ and noting that the real and the imaginary parts of the complex number $(\xi^{-2} -\xi^{2})f_{L_{0}}(\xi)$ are even imply that the real and the imaginary parts of the  $f_{L_{+}}(\xi)$ and $f_{L_{-}}(\xi)$ have the same parity modulo two. Thus if the result holds for the link $L_{-}$ and $L_{0}$, then it easy to see that it follows for the link $L_{+}$ by induction on crossing changes to yield a link of crossing number zero. 
\end{proof}

Now we give a connected state expansion of the mock-determinant for checkerboard colorable links. Before we do so we recall some notions and terminologies first for this section.

We let $D$ be a checkerboard colorable virtual link diagram of the virtual link $L$ and $c_1,\dots ,c_n$ be its crossings arbitrarily ordered. We consider the set $\mathcal{S}_D$ of connected states of $D$ and we let $s$ be an element of $\mathcal{S}_D$. In the vicinity of each deleted crossing $c_i$, we put a marker which is a segment connecting two points on the arcs of the smoothing of $c_i$. Then we label each marker by $A$, $a$, $B$, or $b$ according to the type of the crossing and the checkerboard coloring of the diagram $D$ at $c_i$ as shown by the rules in Figure \ref{fig9}. An example of a labeling of a connected state of a virtual checkerboard virtual link diagram is depicted in Figure \ref{diagramplusstate}.

We denote by $A_s,a_s,B_s,$ and $b_s$  the number of $A,a,B,$ and $b$ markers in the state $s$, respectively. A crossing $c_i$ of $D$ is \textit{internal} to $s$ if the $i$-th marker is either an $A$-marker or $B$-marker otherwise $c_i$ is \textit{external} to $s$.
 We say that two distinct crossings $c_i$ and $c_j$ \textit{interlace} in $s$ if one visits alternately the smoothing arcs of $c_i$ and $c_j$ when traversing the circle generated by the state $s$. The labeled state on the right of Figure \ref{diagramplusstate} shows that the crossings $c_3$ and $c_4$ interlace, but $c_3$ does not interlace with either $c_1$ or $c_2$.

For a crossing $c_i$ of $D$, we denote the subset of $\left\lbrace 1,2, \hdots ,n \right\rbrace$ consisting of the indices of crossings interlacing with $c_i$ in $s$ by $\mathcal{C}_i(s)$. We denote the complement of $\mathcal{C}_{i}(s)$ in $\left\lbrace 1, \hdots , n \right\rbrace$ by $\overline{\mathcal{C}}_{i}(s)$.
The crossing $c_i$ is \textit{live} with respect to $s$ if $\mathcal{C}_i(s) \subset \left\lbrace i+1, \hdots ,n \right\rbrace$ otherwise $c_i$ is \textit{dead} with respect to $s$. Being internal or external and live or dead determine the \textit{state} of the crossing $c_i$ with respect to $s$.
 \begin{figure}
\centering
\includegraphics[scale=0.15]{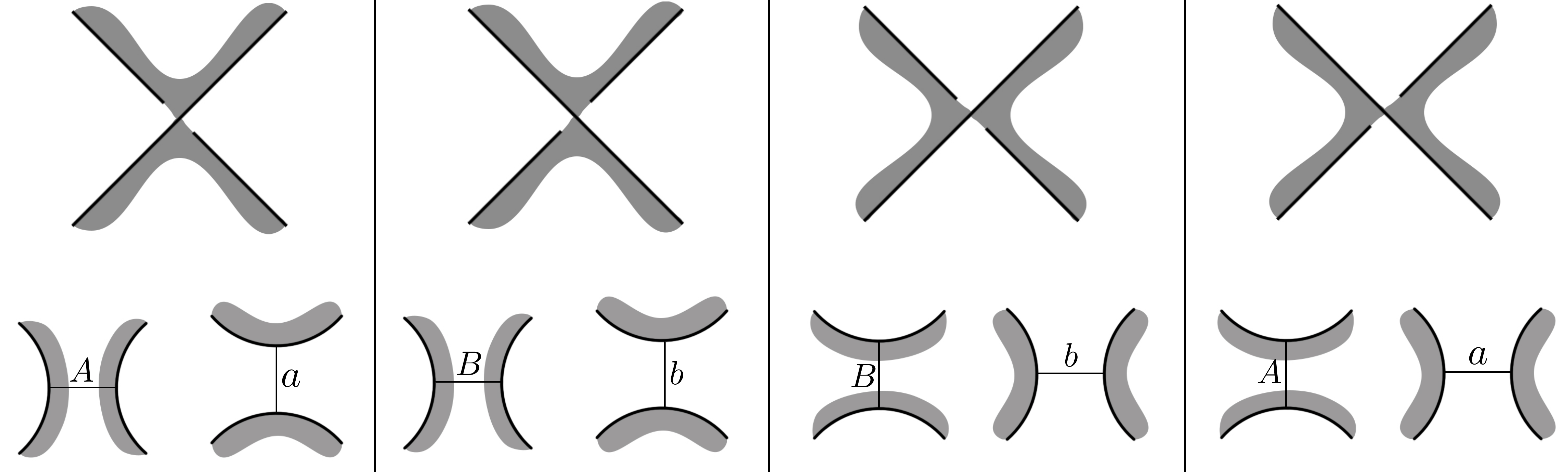}   
\caption{The marker corresponding to the smoothed crossing and the rule of its la beling.}
\label{fig9}
\end{figure}

\begin{figure}
\centering
\includegraphics[scale=0.15]{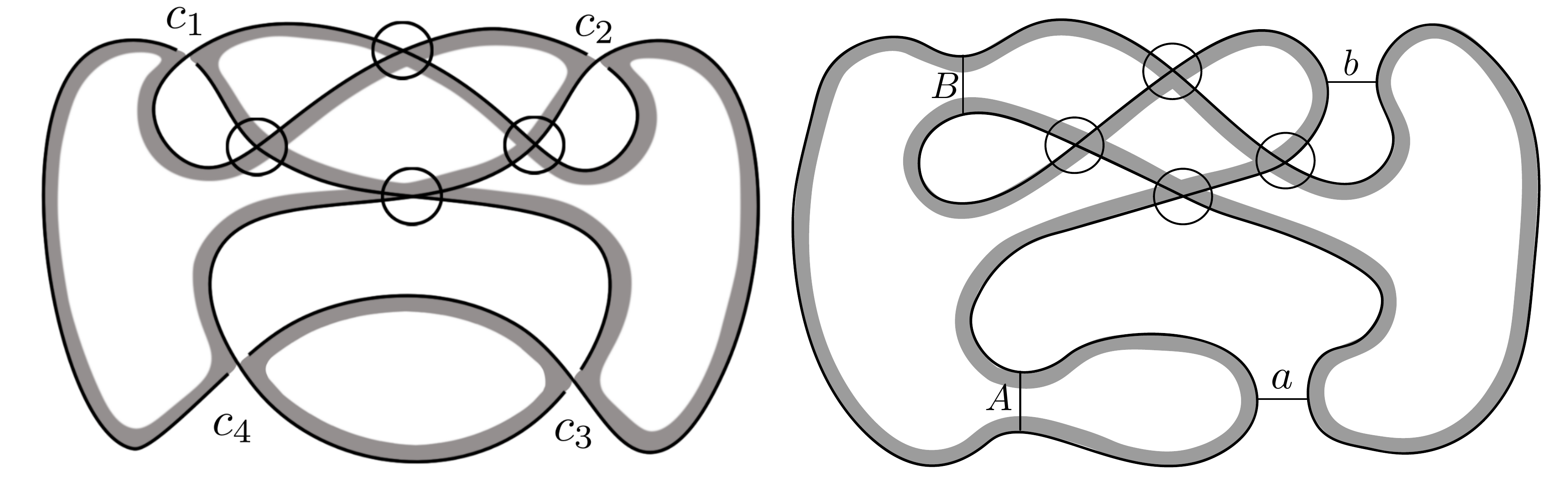}   
\caption{A checkerboard virtual link diagram and the labeling of one of its connected states.}
\label{diagramplusstate}
\end{figure}

The activity word $w(s)$ of $s$ is a word of $n$ letters in the alphabet $\{L, D, l, d, \overline{L}, \overline{D}, \overline{l}, \overline{d}\}$ such that the $i$-th activity letter $w_i(s)$ corresponding to the crossing $c_i$ is chosen according to Table (\ref{table1}). Once the activity word $w(s)$ is established, we assign to each letter $w_i (s)$ a monomial called the weight of the crossing $c_i$ with respect to $s$ and denoted by $\mu_i(s)\in \Z[q^{-1},q]$ according to Table (\ref{table2}). The weight of the connected state $s$ is the monomial $\mu(s):=\prod_{1\le i\le n}\mu_i(s)$. Now we define the Laurent polynomial $X_D$ for $D$ as follows:
 $$X_D(q)=\sum_{s\in\mathcal{S}_D}\mu(s).$$
 \renewcommand{\arraystretch}{1.3} 
 \begin{table}[ht]
\begin{center}
\begin{tabular}{|ll|l|l|}
\hline
\multicolumn{2}{|l|}{State of $c_i$ with respect to $s$}                   & The label of the $i$-th marker & The activity letter $w_i(s)$ \\ \hline
\multicolumn{1}{|l|}{\multirow{4}{*}{Internal}} & \multirow{2}{*}{Live} & $A$   &  {$L$} \\  \cline{3-4} 
\multicolumn{1}{|l|}{}                          &                       & $B$   & {$\overline{L}$}    \\   \cline{2-4} 
\multicolumn{1}{|l|}{}                          & \multirow{2}{*}{Dead} & $A$   & {$D$}   \\   \cline{3-4} 
\multicolumn{1}{|l|}{}                          &                       & $B$   &  {$\overline{D}$}   \\  \hline
\multicolumn{1}{|l|}{\multirow{4}{*}{External}} & \multirow{2}{*}{Live} & $a$   & {$l$}    \\  \cline{3-4} 
\multicolumn{1}{|l|}{}                          &                       & $b$ &  {$\overline{l}$}    \\  \cline{2-4} 
\multicolumn{1}{|l|}{}                          & \multirow{2}{*}{Dead} & $a$ & {$d$}   \\ \cline{3-4} 
\multicolumn{1}{|l|}{}                          &                       & $b$ & {$\overline{d}$}   \\ \hline
\end{tabular}
\vspace{0.2cm}
\caption{The activity of the crossing $c_{i}$ with respect to the state $s$ and the corresponding letter $w_i(s)$.}
\label{table1}
\end{center}
\end{table}

\begin{table}[ht]
\centering
\begin{tabular}{|c|c|c|c|c|c|c|c|c|} 
\hline 
$w_i(s)$ & $L$ & $D$ & $l$ & $d$ &  $\overline{L}$ & $\overline{D}$ &  $\overline{l}$ & $\overline{d}$ \\
\hline
$\mu_i(s)$ & $-q^{-3}$ & $q$ & $-q^3$ & $q^{-1}$ & $-q^3$ & $q^{-1}$ & $-q^{-3}$ & $q$ \\
\hline
\end{tabular}
\vspace{0.2cm}
\caption{The weight of the letter  $w_{i}(s)$ in the word $w(s)$.}
\label{table2}
\end{table}

 \begin{rem}
 The above formula gives a connected state expansion of the Jones-Kauffman polynomial of virtual links according to  \cite[Corollary\,3.4]{abchir}.
 \end{rem}
\begin{prop}\label{newprop1}
If $D$ is a connected checkerboard colorable virtual link diagram, then ${\rm mdet}(D)=\left|\displaystyle\sum_{s\in \mathcal{S}_D}i^{-B_s-a_s}\right| = \left|\displaystyle\sum_{s\in \mathcal{S}_D}i^{A_s+b_s}\right|$.
\end{prop}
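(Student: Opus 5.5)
The plan is to evaluate the connected state expansion $X_D(q)=\sum_{s\in\mathcal{S}_D}\mu(s)$ at the value of $q$ corresponding to $A=\xi$, and then read off the $L_1$-norm. By the Remark above, which invokes \cite[Corollary\,3.4]{abchir}, $X_D$ computes the Jones-Kauffman polynomial of $L$ in the variable $q$, up to the normalization $q^{\frac12}=A^{-2}$ used in the table. Under this normalization $A=\xi$ gives $q^{\frac12}=\xi^{-2}=-i$, so $q=-1$. The odd powers of $q$ appearing in Table \ref{table2} are unambiguous, and any half-integer shifts are absorbed into a global unit factor.

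Since $D$ is checkerboard colorable, \cite[Theorem\,1]{K} says that one of $\Phi_L$, $\Psi_L$ vanishes. Thus $f_L(\xi)$ is purely real or purely imaginary, and
\[
\mdet(L)=\|f_L(\xi)\|_1=|f_L(\xi)|.
\]
It therefore suffices to compute $|X_D(-1)|$ up to a unit.

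Next I evaluate the weights at $q=-1$. The weights $-q^{\mp3}$ become $1$ and the weights $q^{\pm1}$ become $-1$. Hence $\mu(s)=(-1)^{\#\text{dead}(s)}$, which is a sign depending on liveness. The statement, however, involves $i^{-B_s-a_s}$, which depends only on the marker labels. So the evaluation must be done more carefully, keeping the phase from $q^{\frac12}$ and $A$ rather than collapsing to $q=-1$.

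I will therefore redo the computation directly in $A$. The expansion of \cite{abchir} comes from the bracket sum over states restricted, via activities, to connected states. In the bracket, a marker of label $A$ or $a$ contributes $A$ and a marker of label $B$ or $b$ contributes $A^{-1}$, up to the relabeling by checkerboard color. At $A=\xi$ the loop factor $-A^2-A^{-2}$ vanishes, so only states with a single circle survive. These are exactly the connected states, and the activity data then becomes irrelevant. This yields
\[
f_L(\xi)=(-\xi)^{-3w(D)}\sum_{s\in\mathcal{S}_D}\xi^{\,\alpha_s-\beta_s},
\]
where $\alpha_s$ and $\beta_s$ count the $A$-type and $B$-type smoothings. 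I then translate the exponent using the checkerboard labeling of Figure \ref{fig9}. Here I must check that $\alpha_s-\beta_s$ equals $n-2(B_s+a_s)$, possibly shifted by a constant depending only on $D$. Given that, the sum is a unit times $\sum_s \xi^{-2(B_s+a_s)}=\sum_s i^{-B_s-a_s}$. Taking moduli removes the unit prefactor and gives the first equality. The second equality follows from $A_s+a_s+B_s+b_s=n$: we have $i^{A_s+b_s}=i^{n}\, i^{-B_s-a_s}$, and $|i^n|=1$.

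The main obstacle is the bookkeeping in the middle step. I need to match the marker labels $A,a,B,b$ with the $A^{\pm1}$ smoothing weights, whose assignment is flipped by the checkerboard color, and confirm that the difference is a global constant. Checkerboard colorability of $D$ is exactly what makes this consistent. If the direct bracket argument is unclear, I will fall back on the activity formula. There I would verify that the product of weights at $q^{\frac12}=-i$ reduces to $i^{-B_s-a_s}$ times a factor independent of $s$, noting that the barred and unbarred letters in Table \ref{table2} have reciprocal weights.
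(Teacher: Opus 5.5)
Your main route differs from the paper's and works in outline. The paper substitutes $q=\xi$ into the activity expansion of \cite[Corollary\,3.4]{abchir}, normalized by $A=q$. It then checks by hand that the activity-dependent factor $(-1)^{L_s+l_s+\overline{l}_s+\overline{L}_s}\xi^{-4(L_s-l_s+\overline{l}_s-\overline{L}_s)}$ equals $1$. You instead note that $-\xi^{2}-\xi^{-2}=0$, so the plain bracket state sum already collapses onto one-circle (connected) states. That explains why the activities must disappear and makes the activity expansion unnecessary. Your explicit appeal to \cite[Theorem\,1]{K} to replace $\|f_L(\xi)\|_1$ by $|f_L(\xi)|$ also fills in a step the paper leaves implicit.

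However, the step everything hinges on is stated wrongly. Markers $A$ and $a$ do not both mark $A$-smoothings. The letter records the type of the crossing relative to the coloring, and the case records internal versus external, so changing case flips the smoothing. Table \ref{table2} forces this: with $q=A$ a dead crossing contributes exactly its smoothing weight, and $D,\overline{d}\mapsto q$ while $\overline{D},d\mapsto q^{-1}$. The live weights agree, e.g.\ $-q^{-3}=q+q^{-1}(-q^{2}-q^{-2})$. So $A$- and $b$-markers are $A$-smoothings, and $B$- and $a$-markers are $B$-smoothings. Hence $\alpha_s=A_s+b_s$, $\beta_s=B_s+a_s$, and $\alpha_s-\beta_s=n-2(B_s+a_s)$ exactly, with no shift.

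With the dictionary you wrote, you would get $n-2(B_s+b_s)$ instead. Your hedge ``up to a constant'' cannot save this, because $b_s-a_s$ is not constant on $\mathcal{S}_D$ in general. Whenever two connected states differ only in which crossing of a Reidemeister~II bigon is internal, their values of $b_s-a_s$ differ by $2$. Under the correct dictionary those two states contribute $\xi^{2}$ and $\xi^{-2}$ times a common factor and cancel, which is how R2-invariance is realized. Under yours they would add.

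Your fallback is also misdirected. The normalization $q^{\frac12}=A^{-2}$ belongs to the table from \cite{G}, not to $X_D$, so the right evaluation point is $q=\xi$, not $q=-1$. The collapse to $(-1)^{\#\mathrm{dead}(s)}$ that you observed comes from that wrong substitution, not from a lost phase; no $q^{\frac12}$ occurs in $\mu(s)$ to recover one.

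With the dictionary corrected, your direct argument yields $f_L(\xi)=(-\xi)^{-3w(D)}\xi^{n}\sum_{s}i^{-B_s-a_s}$, as in the paper.
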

\begin{proof}
For $\varepsilon \in \left\lbrace L,\overline{L},l,\overline{l},D,\overline{D},d,\overline{d} \right\rbrace$, we denote by $\varepsilon_s$ the number of occurrences of the letter $\varepsilon$ in the activity word of a connected state $s$. By \cite[Corollary\,3.4]{abchir} with the normalization $A=q$, we have
  \begin{align*}
X_D(q) & =\sum_{s \in \mathcal{S}_D} (-q^{-3})^{L_s+\overline{l}_s} (-q^3)^{\overline{L}_s+l_s} q^{D_s + \overline{d}_s} (q^{-1})^{\overline{D}_s+d_s} \\
& = \sum_{s \in \mathcal{S}_D} (-1)^{L_s+l_s+\overline{l}_s +\overline{L}_s} q^{-3(L_s-l_s+\overline{l}_s-\overline{L}_s)} q^{D_s-d_s+\overline{d}_s -\overline{D}_s} \\
& = \sum_{s \in \mathcal{S}_D} (-1)^{L_s+l_s+\overline{l}_s +\overline{L}_s} q^{-4(L_s-l_s+\overline{l}_s-\overline{L}_s)} q^{L_s+D_s-(l_s+d_s)+\overline{l}_s+\overline{d}_s-(\overline{L}_s+\overline{D}_s)} \\
& = \sum_{s \in \mathcal{S}_D} (-1)^{L_s+l_s+\overline{l}_s +\overline{L}_s} q^{-4(L_s-l_s+\overline{l}_s-\overline{L}_s)} q^{A_s-a_s+b_s-B_s}.
\end{align*}
Thus, we have $$X_D(\xi)=\sum_{s\in \mathcal{S}_D}\xi^{A_s-a_s+b_s-B_s}.$$
On the other hand, we have $$A_s-a_s+b_s-B_s = A_s+a_s+b_s+B_s-2(B_s+a_s) = c(D)-2(B_s+a_s), \text{and }$$ $$A_s-a_s+b_s-B_s = -(A_s+a_s+b_s+B_s)+2(A_s+b_s) = -c(D)+2(A_s+b_s),$$ where $c(D)$ is the number of crossings of the diagram $D$. Finally, we have
\begin{align*}
 f(\xi) = (-\xi)^{-3w(D)}X_D(\xi) & = (-\xi)^{-3(y(D)-x(D))}\sum_{s\in \mathcal{S}_D}\xi^{c(D)-2B_s-2a_s} \\ & = (-\xi)^{-3(y(D)-x(D))}\xi^{y(D)+x(D)}\sum_{s\in \mathcal{S}_D}i^{-B_s-a_s} \\ & =  (-1)^{-3w(D)}(-\xi)^{-2y(D)+4x(D)}\sum_{s\in \mathcal{S}_D}i^{-B_s-a_s} \\ & = i^{y(D)}\sum_{s\in \mathcal{S}_D}i^{-B_s-a_s} 
\end{align*}
The result now follows for the first claim directly from the definition of the mock-determinant and the fact that $\left|i^{y(D)}\sum_{s\in \mathcal{S}_D}i^{-B_s-a_s}\right| =\left|\sum_{s\in \mathcal{S}_D}i^{-B_s-a_s}\right|$. A similiar argument can be applied to prove the second claim. 
\end{proof}

\section*{Acknowledment}
The authors acknowledge that AI has been lightly used for the sole purpose of typesetting assistance, grammatical correction, and paraphrasing during this work to improve the readability and enhance the language quality of the manuscript. After this use, the authors reviewed, edited and approved the final content, and took full responsibility for the scientific integrity and accuracy of the text.

\end{document}